\newtheorem{theorem}{Theorem}[section]
\newtheorem{corollary}[theorem]{Corollary}
\newtheorem{definition}[theorem]{Definition}
\newtheorem{lemma}[theorem]{Lemma}
\newtheorem{proposition}[theorem]{Proposition}
\theoremstyle{remark}
\newtheorem{example}[theorem]{Example}
\numberwithin{equation}{section}
\newcommand{\p}{\partial}
\newcommand{\T}{\vartheta}
\begin{document}
\title[Mean field equation and Lam\'{e} equation]{Geometric quantities arising from \\ bubbling analysis of mean field equations\\}

\author{Chang-Shou Lin}
\address{Taida Institute for Mathematical Sciences (TIMS), Center for Advanced Study in
Theoretical Sciences (CASTS), National Taiwan University, Taipei 10617, Taiwan }
\email{cslin@math.ntu.edu.tw}

\author{Chin-Lung Wang}
\address{Department of Mathematics, Taida Institute for Mathematical Sciences (TIMS),
National Taiwan University, Taipei 10617, Taiwan}
\email{dragon@math.ntu.edu.tw}

\begin{abstract} 
Let $E = \Bbb C/\Lambda$ be a flat torus and $G$ be its Green function with singularity at $0$. Consider the multiple Green function $G_n$ on $E^n$: 
\begin{equation*} %\label{mG}
G_{n}(z_1,\cdots,z_n) := \sum_{i < j} G(z_{i} - z_{j}) - n \sum_{i = 1}
^{n} G(z_{i}).
\end{equation*}
A critical point $a = (a_1, \cdots, a_n)$ of $G_n$ is called \emph{trivial} if $\{a_1, \cdots, a_n\} = \{-a_1, \cdots, -a_n\}$. For such a point $a$, two geometric quantities $D(a)$ and $H(a)$ arising from bubbling analysis of mean field equations are introduced. $D(a)$ is a global quantity measuring asymptotic expansion and $H(a)$ is the Hessian of $G_n$ at $a$. By way of geometry of Lam\'e curves developed in \cite{CLW}, we derive precise formulas to relate these two quantities. 
\end{abstract}
\maketitle

\section{Introduction}

Let $E = E_{\tau}:=\mathbb{C}/\Lambda_{\tau}$ be a flat torus where $\Lambda_{\tau}=\mathbb{Z+Z}\tau$ and $\tau \in \mathbb{H} = \left \{\tau \mid \operatorname{Im}\tau > 0\right \} $. We use the convention $\omega_1 = 1$, $\omega_2 = \tau$ and $\omega_3 = 1 + \tau$. Consider the following mean field equation with singular strength $\rho > 0$:
\begin{equation} \label{mfe}
\triangle u + e^{u} = \rho \,\delta_{0} \quad \text{in $E$}, 
\end{equation}
where $\delta_{0}$ is the Dirac measure at $0$. Solutions to this simple looking equation (\ref{mfe}) possess a rich structure from either the point of view of partial differential equations or of integrable systems. See \cite{CLW, CL-1, CLW4}.

Not surprisingly, \eqref{mfe} is related to various research areas. In conformal geometry, a solution $u(x)$ to \eqref{mfe} leads to a metric $ds^{2}=\frac{1}{2}e^{u}\,(dx^{2} + dy^2)$ with constant Gaussian curvature $+1$ acquiring a conic singularity at $0$. It also appears in statistical physics as the equation for the \emph{mean field limit} of the Euler flow in Onsager's vortex model, hence its name. In the physical model of superconductivity, (\ref{mfe}) is one of limiting equations of the well-known Chern--Simons--Higgs equation as the coupling parameter tends to $0$. We refer the interested readers to \cite{CY, CL3, Choe, LW, LY, NT1} and references therein for recent development on this equation.

One important feature of \eqref{mfe} is the so-called \emph{bubbling phenomena}. Let $u_{k}$ be a sequence of solutions to \eqref{mfe} with $\rho = \rho_{k} \rightarrow 8\pi n$, $n\in \mathbb{N}$, and $\max_{E} u_{k}(z)\rightarrow+\infty$ as $k\rightarrow+\infty$. Then it was proved in \cite{CL3} that $u_{k}$ has exactly $n$ blowup points
$\{a_{1}, \cdots, a_{n}\}$ in $E$ and $a_{i} \ne 0$ for all $i$. The well-known \emph{Pohozaev identity} says that the position of these blowup points are determined by the following system of equations:

\begin{equation} \label{poho}
n\nabla G(a_{i}) = \sum_{j\not =i}^{n}\nabla G(a_{i} - a_{j}), \qquad 1\leq
i\leq n. 
\end{equation}
Here $G(z, w)=G(z - w)$ is the Green function on $E$ defined by
\begin{equation*} 
\begin{cases}
\displaystyle -\triangle G = \delta_0 - \frac{1}{|E|} & \mbox{on $E$}, \\
\int_{E} G = 0,
\end{cases}
\end{equation*}
and $|E|$ is the area of $E$. %\eqref{poho} is exactly the critical point equations for \eqref{mG}. 

\indent If $\rho_k = 8\pi n$ for all $k$, then $\{u_k\}$ consists of \emph{type II solutions} with explicit blowup behavior (cf.~\cite{CLW}). On the other hand, we have
\medskip

\noindent \textbf{Theorem A.} \cite{CLW, CL-1} {\it Let $u_{k}$ be a
sequence of bubbling solutions of equation \eqref{mfe} with $\rho =\rho
_{k}\rightarrow 8\pi n$, $n \in \mathbb{N}$. If $\rho_{k} \ne 8\pi n$ for large $k$, then

\begin{itemize}
\item[(1)] The blowup set $a=\{a_{1}, \cdots, a_{n}\}$ satisfies 
$$
\{a_{1}, \cdots, a_{n}\} = \{-a_{1}, \cdots, -a_{n} \} \quad \mbox{in $E$}.
$$

\item[(2)] Let $\lambda_{k} :=\max_{E} u_{k}(z)$, then there is a constant $D(a)$ such that
\begin{equation} \label{bubb}
\rho_{k} - 8\pi n = (D(a) + o(1)) e^{-\lambda_{k}}.
\end{equation}
\end{itemize}
}

From \eqref{bubb}, the quantity $D(a)$ plays a fundamental role in controling the sign of $\rho_{k} - 8\pi n$. Thus it provides one of the key geometric messages for bubbling solutions $u_{k}$. The question is how to compute $D(a)$? 

There exists a complicate expression for $D(a)$ which we will recall in \eqref{D-invariant} below. Define the regular part $\tilde{G}(z,w)$ of $G(z,w)$ by
\[
\tilde{G}(z,w) := G(z,w) + \frac{1}{2\pi} \log|z - w|.
\]
Given a blowup set $a = \{ a_{1}, \cdots ,a_{n} \}$ as in Theorem A (1), we set
\begin{equation} \label{f-q}
\begin{split}
f_{a_{i}}(z) &=  8\pi \bigg(\tilde{G}(z,a_{i}) - \tilde{G}(a_{i}, a_{i}) + \sum_{j \ne i}(G(z, a_{j}) - G(a_{i}, a_{j})) \\
& \qquad  -n(G(z) - G(a_{i}))\bigg),
\end{split}
\end{equation}
\begin{equation} \label{mu}
\mu_{i} :=\exp \bigg(8\pi(\tilde{G}(a_{i}, a_{i}) + \sum_{j \ne i}G(a_{i}, a_{j}) - n G(a_{i}))\bigg). 
\end{equation}
Then $D(a)$ can be calculated by
\begin{equation} \label{D-invariant}
D(a) = \lim_{r\rightarrow 0} \sum_{i = 1}^{n} \mu_{i} \left(  \int_{\Omega
_{i}\setminus B_{r}(a_{i})} \frac{e^{f_{a_{i}}(z)} - 1}{|z - a_{i}|^{4}} - \int_{\mathbb{R}^{2} \setminus \Omega_{i}} \frac{1}{|z - a_{i}|^{4}}\right),
\end{equation}
where $\Omega_{i}$ is any open neighborhood of $a_{i}$ in $E$ such that
$\Omega_{i}\cap \Omega_{j} = \emptyset$ for $i \ne j$ and $\bigcup_{i=1}^{n}\bar{\Omega}_{i} = E$. The limit exists since $f_{a_{i}}(z) = O(|z - a_{i}|^{3})$ plus a quadratic harmonic function for all $i$. For a proof, see \cite{LY}.

Consider the divisor (complete diagonal) in $(E^\times)^n$:
$$
\Delta_{n} = \{(z_{1},\cdots, z_{n}) \in (E^{\times})^{n} \mid \text{$z_{i} = z_{j}
$ for some $i \ne j$} \}
$$ 
and define the \emph{multiple Green function} $G_{n}(z) = G_{n}(z; \tau)$ on $(E^\times)^n \setminus \Delta_n$ by
\begin{equation} \label{multiple-G}
G_{n}(z) := \sum_{i < j} G(z_{i} - z_{j}) - n \sum_{i=1}^{n} G(z_{i}).
\end{equation}
Notice that $G_{n}$ is invariant under the permutation group $S_n$. It is clear that the system \eqref{poho} gives the critical point equations of $G_{n}$. A critical point $a$ is called \emph{trivial } if $\{a_{1}, \cdots, a_{n}\} = \{-a_{1}, \cdots , -a_{n}\}$ in $E$. Theorem A (1) says that the blowup set of a sequence of bubbling solutions $u_{k}$ of \eqref{mfe} with $\rho_{k} \ne 8\pi n$ for large $k$ is a trivial critical point of $G_{n}$.

To proceed, it is crucial and natural to ask when is a trivial critical point a degenerate critical point? To answer this question, we need to study the Hessian $H(a)$ at a trivial critical point $a$:
\begin{equation} \label{Hessian}
H(a) := \det D^{2} G_{n}(a).
\end{equation}
The quantity $H(a)$ can be used to determine the local maximum points of $u_k$ near $a_i$, $1\le i\le n$, and to provide other useful geometric information for the bubbling solutions $u_{k}$ (cf.~\cite{CL-1}). 

There are many potential applications of these two quantities. For example, $H(a)$ and $D(a)$ together imply \emph{local uniqueness} of bubbling solutions, as described in the following theorem:\medskip

\noindent \textbf{Theorem B.} \emph{Let $u_{k}(z)$ and $\tilde{u}_{k}(z)$ be two sequences of solutions to equation \eqref{mfe} with the same parameter $\rho_{k} \rightarrow 8\pi n$ and $\rho_{k} \ne 8\pi n$ for large $k$. If they have the same blowup set $a = \{a_{1}, \cdots, a_{n}\}$ and both $H(a)$ and $D(a)$ do not vanish, then $u_{k}(z)=\tilde{u}_{k}(z)$ for large $k$.}
\medskip

The proof of Theorem B will be given in a forthcoming paper by the first author. It is unexpected since after some suitable scaling at each blowup point
$a_{i}$, the solution $u_{k}(z)$\emph{ }(resp.\emph{ }$\tilde{u}_{k}(z)$)
converge to a solution of equation%
\begin{equation*} %\label{liou}
\Delta w + e^{w} = 0 \quad \text{in $\mathbb{R}^{2}$},\qquad \int_{\mathbb{R}^{2}} e^{w} < \infty,
\end{equation*}
and it is easy to see that the linearized operator $\Delta + e^{w}$ has non-trivial kernel. To prove the uniqueness, we have to overcome the difficulty caused by the degeneracy of the operator $\Delta + e^{w}$.

Surprisingly, these two quantities $D(a)$ and $H(a)$ are related
to each other as shown by the main result of this paper:

\begin{theorem} [=Theorem \ref{DD-conj}] \label{thm1} 
For fixed $n\in \mathbb{N}$ and any trivial critical point $a$ of $G_{n}(z)$, there exists $c_a \geq 0$ such that
\begin{equation} \label{ff1}
H(a) = (-1)^{n} c_a D(a). 
\end{equation}
Moreover, $c_a > 0$ if and only if $B_{a} := (2n - 1) \sum_{i = 1}^{n} \wp(a_{i})$
is not a multiple root of the Lam\'{e} polynomial $\ell_{n}(B)$.
\end{theorem}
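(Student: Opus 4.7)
The plan is to express both $H(a)$ and $D(a)$ as explicit functions on the Lam\'{e} curve $Y_n$ studied in [CLW], and then read off identity \eqref{ff1} by direct comparison. Recall that a trivial critical point $a$ of $G_n$ is encoded, via the Hermite--Halphen ansatz, by a point on $Y_n$ whose $B$-coordinate is precisely $B_a = (2n-1)\sum_{i=1}^n \wp(a_i)$. The curve $Y_n$ is of hyperelliptic type with defining equation built from $\ell_n(B)$, so smoothness of $Y_n$ over $B_a$ is equivalent to $\ell_n'(B_a) \ne 0$. This is the geometric origin of the non-degeneracy condition in the second assertion of the theorem, and it will be \emph{the} common mechanism that controls the vanishing of $H(a)$ and of $D(a)$.

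First I would compute $H(a)$. Differentiating \eqref{poho} along the one-parameter deformation of $a$ induced by moving along $Y_n$ (with $B$ as a local coordinate), one obtains a tangent direction to the critical locus which becomes null exactly at the ramification points of the projection $Y_n \to \mathbb{P}^{1}_{B}$. Using the relation $B = (2n-1)\sum \wp(a_i)$ together with \eqref{poho}, I expect a formula of the form $H(a) = (-1)^n P(a)\,\ell_n'(B_a)$, with $P(a) > 0$ rational in the values $\wp(a_i - a_j)$ and $\wp'(a_i)$; the sign $(-1)^n$ should emerge by expanding the Hessian determinant against the $(n-1)$-dimensional complement of this distinguished direction inside the critical locus. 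In parallel I would evaluate the global integral \eqref{D-invariant} by rewriting $e^{f_{a_i}}$ through a Liouville-type representation provided by the Hermite--Halphen Lam\'{e} solutions, thereby converting the integral on $E$ into a sum of residues on $Y_n$. The expected output is $D(a) = Q(a)\,\ell_n'(B_a)$ with $Q(a) > 0$ built from the same $\wp$-data, and dividing then produces $c_a := P(a)/Q(a) \ge 0$ with $c_a > 0$ precisely when $\ell_n'(B_a) \ne 0$.

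The main obstacle will be the explicit evaluation of $D(a)$: unlike the Hessian, which is a finite-dimensional determinant of derivatives of $G_n$, the invariant $D(a)$ is a regularized global integral whose link to the Lam\'{e} polynomial is indirect. The crux is to construct a meromorphic $1$-form on $Y_n$ whose residues and periods capture both the quartic singular behaviour of the integrand near each $a_i$ and the quadratic harmonic ambiguity hidden in the expansion $f_{a_i}(z) = O(|z - a_i|^3)$ plus harmonic, and then to verify that the $r \to 0$ counterterm over $\mathbb{R}^{2}\setminus \Omega_i$ cancels exactly against the divergent part of the interior integral. Establishing the positivity of $P(a)/Q(a)$ uniformly in $a$ may in turn require non-trivial identities among Eisenstein-type series on $E_\tau$, which I would seek by exploiting the modular behaviour of $G_n$ under $SL(2,\mathbb{Z})$ and the known modular properties of the Lam\'{e} polynomial.
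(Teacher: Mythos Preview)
Your plan contains a structural error in how you expect the factor $\ell_n'(B_a)$ to appear. You write that you anticipate
\[
H(a) = (-1)^n P(a)\,\ell_n'(B_a), \qquad D(a) = Q(a)\,\ell_n'(B_a),
\]
with $P(a), Q(a) > 0$, and that dividing gives $c_a = P(a)/Q(a)$. But if $P$ and $Q$ are both strictly positive this ratio is \emph{always} positive, which contradicts the assertion you are trying to prove, namely that $c_a > 0$ exactly when $\ell_n'(B_a) \neq 0$. The problem is the second of your two ans\"atze: in fact $D(a)$ does \emph{not} carry a factor of $\ell_n'(B_a)$. The paper computes $D(p)$ explicitly (Theorem~\ref{D(p)-formula}) and identifies it with the Jacobian of the restricted map $\phi|_{Y_n}$ at the branch point (Corollary~\ref{D=Jac}); this quantity is a two--dimensional ``Hessian along $Y_n$'' and has no reason to vanish when $Y_n$ becomes singular. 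The degeneracy $\ell_n'(B_a)=0$ lives entirely in $c_a$, not in $D(a)$.

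Accordingly, the paper does not compute $H$ and $D$ separately and then divide. Instead it splits the system $\nabla G_n=0$ into $n-1$ \emph{holomorphic} equations $g^1=\cdots=g^{n-1}=0$ (the defining equations of $Y_n$) and one \emph{non-holomorphic} equation $g^n = \tfrac12\phi = 0$. After a constant linear change relating $\nabla G_n$ to $g=(g^1,\ldots,g^n)$, one performs column operations on the real $2n\times 2n$ Jacobian $Dg$ using that $g^1,\ldots,g^{n-1}$ vanish identically along the local holomorphic parametrization $C\mapsto a(C)$ of $Y_n$. This block-triangularizes $Dg$ and yields
\[
\det D^2 G_n(p) \;=\; (-1)^n\,\frac{n^2 e^{-c}}{4\,\mathrm{Im}\,\tau\,(2\pi)^{2n}}\;\frac{\bigl|\det D'^{\mathbb C}g(p)\bigr|^2}{|a_n'(0)|^2}\; D(p),
\]
where $D'^{\mathbb C}g$ is the principal $(n-1)\times(n-1)$ minor of the holomorphic Jacobian. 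Thus $c_p$ is a positive multiple of $|\det D'^{\mathbb C}g(p)|^2$, which vanishes precisely when $Y_n$ is singular at $p$, i.e.\ when $B_p$ is a multiple root of $\ell_n$. This ``analytic adjunction'' is the missing idea: the $\ell_n'$--type degeneracy is isolated in the $(n-1)$ holomorphic directions transverse to $Y_n$, while $D(a)$ accounts for the remaining two real directions along $Y_n$. Your proposed residue calculation for $D(a)$ on $Y_n$ and your appeal to modular identities to force positivity are therefore aimed at the wrong target.
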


Here is an outline of the proof, together with a brief description on the content of each section:

The mean field equation \eqref{mfe} is closely related to the \emph{Lam\'e equation} $y'' = (n(n + 1) \wp + B) y$. To prove \eqref{ff1}, a key step is to express $D(a)$ in terms of quantities at a branch point of the hyperelliptic curve $Y_n \to \Bbb C$ associated to the Lam\'e equation. This \emph{Lam\'e curve} $Y_n$ can be represented by $C^2 = \ell_n(B)$ where the \emph{Lam\'{e} polynomial} $\ell_n(B)$ has no multiple roots except for \emph{finitely many} isomorphic classes of tori. This theory is well developed in \cite{CLW} and the results we need will be reviewed in \S \ref{geom-Xn} (cf.~Theorem \ref{thm3}). 
%For example, $\ell_{n}(B)$ has only distinct real roots if $E_{\tau}$ is a rectangle, i.e.~$\tau \in i\mathbb{R}^{+}$; see e.g.~ \cite[p.163]{Poole}. For $n = 2$, $\ell_{2}(B)$ has multiple roots if and only if $E_{\tau}$ is isomorphic to a rhombus with $\tau=e^{\pi i/3}$. See \S \ref{geom-Xn} for more about Lam\'{e} polynomials.

In \S \ref{D-for-n} we study the quantity $D(a)$ in details and derive the above mentioned expression of $D(a)$ in Theorem \ref{D(p)-formula}. In fact, the Lam\'e curve encodes the $n - 1$ algebraic constraints of the system \eqref{poho}, with the remaining analytic constraint being $\sum_{i = 1}^n \nabla G(a_i) = 0$. It is thus natural to study the map $a \mapsto \phi(a) := -4\pi \sum_{i = 1}^n \nabla G(a_i)$ for $a \in Y_n$. It turns out that $D(a)$ is expressible in terms of the Jacobian of $\phi$ (Corollary \ref{D=Jac}).

The proof of Theorem \ref{thm1} is completed in \S \ref{s:proof} by a process called \emph{analytic adjunction}. The idea is simple: The quantity $H(a)$ is a (real) $2n$-dimensional Hessian on $E^n/S_n$ while $D(a)$ can be regarded as a two dimensional Hessian on $Y_n \subset E^n/S_n$. To relate $H(a)$ with $D(a)$ it amounts to reducing the determinant by substituting the $n - 1$ (complex) algebraic equations defining $Y_n$ into it. We end this paper by investigating the case $n = 2$ in Example \ref{DD2} where the value of $c_a$ (given in \eqref{cp-expression}) is seem in more explicit terms.

\section{Lam\'{e} equations and Lam\'e curves \cite{CLW}}
\label{geom-Xn}

Let $\wp(z) = \wp(z; \tau) $ be the Weierstrass elliptic function with periods $\Lambda_\tau$:
\begin{equation*}\label{40-3}
\wp(z; \tau) := \frac{1}{z^{2}}+\sum_{\omega \in \Lambda_{\tau}\setminus \left \{
0\right \}  }\left(  \frac{1}{(z-\omega)^{2}} - \frac{1}{\omega^{2}}\right)  
\end{equation*}
which satisfies the well known cubic equation 
\begin{equation*}\label{40-4}
\wp^{\prime}(z; \tau)^{2} = 4\wp(z; \tau)^{3} - g_{2}(\tau) \wp(z; \tau) - g_{3}(\tau).
\end{equation*}
Let $\zeta(z) = \zeta(z; \tau): = -\int^{z}\wp(\xi; \tau)\,d\xi$ be the Weierstrass
zeta function with quasi-periods $\eta_{1}(\tau)$ and $\eta_{2}(\tau)$:
\begin{equation*} \label{40-2}
\eta_{i}(\tau) := \zeta(z + \omega_i; \tau) - \zeta(z; \tau), \quad i = 1, 2,
\end{equation*}
and $\sigma(z) = \sigma(z; \tau)$ be the Weierstrass sigma function defined by
$\sigma(z) = \exp \int^z \zeta(\xi)\,d\xi$. $\sigma(z)$ is an odd entire function with simple zeros at $\Lambda_\tau$.

The Green function on $E$ can be expressed in terms of elliptic functions. In \cite{LW}, we proved that
\begin{equation}\label{G_z}
-4\pi \frac{\partial G}{\partial z}(z) = \zeta(z) - r\eta_{1} - s\eta_{2} = \zeta(z) - z\eta_1 + 2\pi i s,
\end{equation}
where $z = r + s \tau$ with $r, s\in \mathbb{R}$. Using \eqref{G_z}, equations \eqref{poho} can be translated into the following equivalent system: Consider $a = (a_1, \cdots, a_n) \in E^n$, subject to the constraint $a \in (E^\times)^n \setminus \Delta_n$, that is
\begin{equation} \label{fc5}
a_{i}\ne 0,\quad \mbox{$a_{i} \ne a_{j}$ for $i \ne j$}.
\end{equation}
Then
\begin{equation} \label{ff3}
\sum_{j \ne i} (\zeta(a_{i} - a_{j}) + \zeta(a_{j}) - \zeta(a_{i})) = 0, \qquad 1\leq i \leq n 
\end{equation}
(there are only $n - 1$ independent equations), and
\begin{equation} \label{ff4}
\sum_{i = 1}^{n} \nabla G(a_{i}) = 0.
\end{equation}

We will use \eqref{fc5}--\eqref{ff4} to connect a critical point of $G_n$ defined in \eqref{multiple-G} with the classical Lam\'{e} equation. For the reader's convenience, we review some basics on it and refer the readers to \cite{CLW, Poole, Whittaker-Watson} for further details. 

Recall the Lam\'{e} equation
\begin{equation} \label{Lame0}
\mathcal{L}_{n,B}: \qquad y''(z) = (n(n + 1) \wp(z) + B) y(z),
\end{equation}
where $n\in \mathbb{R}_{\geq-1/2}$ and $B\in \mathbb{C}$ are its \emph{index}
and \emph{accessory parameter} respectively. In general, a solution $y(z)$ is a multi-valued meromorphic function on $\mathbb{C}$ with branch points at $\Lambda$. Any lattice point is a regular singular point with local exponents $-n$ and $n+1$. In this paper we consider only $n\in \mathbb{N}$.

For $a = (a_{1},\cdots, a_{n})$, we consider the \emph{Hermite--Halphen ansatz}:
\begin{equation} \label{ff2-1}
y_{a}(z):=e^{z\sum_{i=1}^{n}\zeta(a_{i})}\frac{\prod_{i=1}^{n}\sigma(z-a_{i}%
)}{\sigma(z)^{n}}.
\end{equation}

\begin{theorem} [\cite{CLW, Whittaker-Watson}] 
Suppose that $a = (a_{1},\cdots, a_{n}) \in (E^\times)^n \setminus \Delta_n$. Then $y_{a}(z)$ is a solution to $\mathcal{L}_{n, B}$ for some $B$ if and only if $a$ satisfies \eqref{ff3} and 
\begin{equation} \label{b-a}
B = B_{a} := (2n - 1) \sum_{i=1}^{n}\wp(a_{i}).
\end{equation}
\end{theorem}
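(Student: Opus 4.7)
The plan is to analyze the ratio $\Phi(z) := y_a''(z)/y_a(z)$ directly and to characterize when it coincides with the elliptic function $n(n+1)\wp(z) + B$. Starting from the logarithmic derivative
$$\frac{y_a'(z)}{y_a(z)} = \sum_{i=1}^n \zeta(a_i) + \sum_{i=1}^n \zeta(z - a_i) - n\zeta(z),$$
the identity $\Phi = (y_a'/y_a)' + (y_a'/y_a)^2$ produces an explicit meromorphic expression. The proof would then establish, in order, that (i) $\Phi$ is elliptic; (ii) its only candidate poles in a fundamental domain lie at $0$ and at the $a_i$; and (iii) the condition \eqref{ff3} is equivalent to the removal of the poles at the $a_i$, after which matching the Laurent expansion at $0$ pins down $B = B_a$.

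For (i), the quasi-periodicities $\sigma(z + \omega_k) = -e^{\eta_k(z + \omega_k/2)} \sigma(z)$ and $\zeta(z + \omega_k) = \zeta(z) + \eta_k$ yield, after a short bookkeeping,
$$\frac{y_a(z + \omega_k)}{y_a(z)} = \exp\!\Bigl(\omega_k \sum_{i=1}^n \zeta(a_i) - \eta_k \sum_{i=1}^n a_i\Bigr),$$
a nonzero constant, from which $\Phi(z + \omega_k) = \Phi(z)$ follows immediately.

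For (ii) and (iii), I would carry out Laurent expansions. Near $z = a_i$ the function $y_a$ has a simple zero, and
$$\frac{y_a'}{y_a} = \frac{1}{z - a_i} + A_i + O(z - a_i), \qquad A_i := \sum_{k \neq i}\bigl(\zeta(a_i - a_k) + \zeta(a_k) - \zeta(a_i)\bigr),$$
so $A_i$ is precisely the left-hand side of \eqref{ff3} at index $i$. The double-pole contribution $-1/(z - a_i)^2$ from $(y_a'/y_a)' = -\sum_k \wp(z - a_k) + n\wp(z)$ exactly cancels the $1/(z - a_i)^2$ from $(y_a'/y_a)^2$, leaving $\Phi(z) = 2A_i/(z - a_i) + O(1)$; hence $\Phi$ is regular at $a_i$ iff \eqref{ff3} holds. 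A parallel expansion at $z = 0$, using $\zeta(z - a_i) = -\zeta(a_i) - \wp(a_i) z + \tfrac{1}{2}\wp'(a_i) z^2 + O(z^3)$ together with $\zeta(z) = 1/z + O(z^3)$, gives
$$\Phi(z) = \frac{n(n+1)}{z^2} + (2n - 1)\sum_{i=1}^n \wp(a_i) + O(z),$$
matching the principal part of $n(n+1)\wp(z)$ with constant term $B_a$.

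Combining the pieces: if \eqref{ff3} holds then $\Phi - n(n+1)\wp$ is an elliptic function with no poles in $E$, hence constant, and the $z^0$ coefficient identifies this constant as $B_a$, so $y_a$ solves $\mathcal{L}_{n, B_a}$. Conversely, if $y_a$ solves $\mathcal{L}_{n, B}$ for some $B$, then $\Phi = n(n+1)\wp + B$ is regular at each $a_i$, forcing $A_i = 0$ and hence \eqref{ff3}, and the constant-term comparison then gives $B = B_a$. The main technical step is the cancellation of principal parts at $z = a_i$ — the $\pm 1/(z - a_i)^2$ cancellation between $(y_a'/y_a)'$ and $(y_a'/y_a)^2$ and the identification of the residue as $2A_i$; the apparent odd-order anomaly in the expansion at $0$ (a linear term proportional to $(1-n)\sum \wp'(a_i)$) causes no trouble, since any elliptic function with only a double pole at $0$ is of the form $c\wp + d$ and so has a Laurent expansion in even powers only.
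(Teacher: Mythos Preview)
The paper does not supply a proof of this theorem; it is quoted as a known result from \cite{CLW, Whittaker-Watson}. Your argument is correct and is essentially the classical proof one finds in Whittaker--Watson: compute the logarithmic derivative of $y_a$, form $\Phi = (y_a'/y_a)' + (y_a'/y_a)^2$, check ellipticity, and reduce the question to the vanishing of the residues $2A_i$ at the points $a_i$ together with the identification of the constant term at $z = 0$.

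Two small remarks. First, you can shorten step (i): since $\zeta(z+\omega_k) = \zeta(z) + \eta_k$, the expression $\sum_i \zeta(a_i) + \sum_i \zeta(z-a_i) - n\zeta(z)$ is already $\Lambda$-periodic (the $n\eta_k$'s cancel), so $\Phi$ is visibly elliptic without computing the multiplier of $y_a$ itself. Second, your handling of the odd linear term at $z=0$ is fine but can be phrased more directly: once \eqref{ff3} holds, $\Phi - n(n+1)\wp$ is elliptic with no poles on $E$, hence constant, and evaluating the constant term forces both $B = B_a$ and, a posteriori, the vanishing of the coefficient $(1-n)\sum_i \wp'(a_i)$. (For $n\ge 2$ this vanishing is in fact the $l=0$ case of the equivalent system \eqref{fc8} recorded in Proposition~\ref{proppp}.)
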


Note that if $a = (a_{1},\cdots, a_{n}) \in (E^\times)^n \setminus \Delta_n$ satisfies \eqref{ff3}, then so does $-a = (-a_{1}, \cdots, -a_{n})$, and then $y_{-a}(z)$ is also a solution of the same Lam\'{e} equation
because $B_{a}=B_{-a}$. Clearly $y_{a}(z)$ and $y_{-a}(z)$ are linearly independent if and only if $\{a_{1},\cdots, a_{n}\} \ne \{-a_{1},\cdots, -a_{n}\}$ in $E$. Furthermore, the condition actually implies that
\begin{equation} \label{fc6}
\{a_{1}, \cdots, a_{n}\} \cap \{-a_{1},\cdots, -a_{n}\} = \emptyset
\end{equation}
because $y_{a}(z)$ and $y_{-a}(z)$ can not have common zeros. For otherwise the Wronskian of $(y_{a}(z), y_{-a}(z))$ would be identically zero, which forces that $y_{a}(z)$, $y_{-a}(z)$ are linearly dependent.

\begin{definition}
Suppose that $a = (a_{1}, \cdots, a_{n}) \in (E^\times)^n \setminus \Delta_n$ satisfies \eqref{ff3}. Then $a$ is called a branch point if $\{a_{1}, \cdots, a_{n}\} = \{-a_{1},\cdots, -a_{n}\}$ in $E$.
\end{definition}

Note that if $a$ is \emph{not} a branch point, then $\wp(a_{i}) \ne \wp(a_{j})$ for
$i \ne j$. By the addition formula
\[
\zeta(u+v)-\zeta(u)-\zeta(v)=\frac{1}{2}\frac{\wp'(u) - \wp'(v)}{\wp(u) - \wp(v)},
\]
the system \eqref{ff3} is equivalent to
\begin{equation} \label{fc7}
\sum_{j \ne i} \frac{\wp'(a_{i}) + \wp'(a_{j})}{\wp(a_{i}) - \wp(a_{j})} = 0, \qquad 1\leq i\leq n.
\end{equation}

The following non-obvious equivalence is crucial for our purpose:

\begin{proposition} \cite[Proposition 5.8.3]{CLW} \label{proppp}  
Suppose that $a = (a_{1}, \cdots, a_{n}) \in (E^\times)^n$ satisfies $\wp(a_{i}) \ne \wp(a_{j})$ for $i \ne j$. Then \eqref{fc7} is equivalent to
\begin{equation} \label{fc8}
\sum_{i = 1}^{n} \wp'(a_{i}) \wp(a_{i})^{l} = 0, \qquad 0\leq l\leq n - 2.
\end{equation}
\end{proposition}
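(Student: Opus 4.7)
The plan is to algebraize the two systems by setting $x_i := \wp(a_i)$ and $y_i := \wp'(a_i)$, so the hypothesis becomes $x_i \neq x_j$ for $i \neq j$, and to translate both conditions into statements about a single polynomial of degree at most $n-1$. Concretely, I would introduce
\begin{equation*}
P(x) := \prod_{i=1}^{n}(x - x_i), \qquad Q(x) := \sum_{i=1}^{n} y_i \prod_{j\neq i}(x - x_j) = P(x)\sum_{i=1}^{n}\frac{y_i}{x - x_i}.
\end{equation*}
Expanding $\sum_i y_i/(x-x_i)$ as a Laurent series at $x = \infty$ gives $\sum_{k\ge 0}(\sum_i y_i x_i^k)\,x^{-k-1}$. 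Since $Q$ has degree at most $n-1$, the condition \eqref{fc8} is precisely the statement that $Q(x)$ is a constant polynomial (equal to the single nonvanishing coefficient $\sum_i y_i x_i^{n-1}$).

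Next I would compute $Q'(x_i)$ directly. Writing $\prod_{j\neq k,i}(x_i - x_j) = P'(x_i)/(x_i - x_k)$ for $k\neq i$, the only surviving contributions in $Q'(x_i) = \sum_l y_l \sum_{k\neq l}\prod_{j\neq l,k}(x-x_j)\big|_{x=x_i}$ give
\begin{equation*}
Q'(x_i) = P'(x_i)\Bigl(y_i \sum_{k\neq i}\frac{1}{x_i - x_k} + \sum_{l\neq i}\frac{y_l}{x_i - x_l}\Bigr) = P'(x_i) \sum_{j\neq i}\frac{y_i + y_j}{x_i - x_j}.
\end{equation*}
Since $P'(x_i) = \prod_{j\neq i}(x_i - x_j) \neq 0$ under our hypothesis, the system \eqref{fc7} is equivalent to $Q'(x_i) = 0$ for all $1\le i\le n$.

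To close the equivalence, observe that $\deg Q' \le n-2$, so $Q'$ cannot have the $n$ distinct roots $x_1,\dots,x_n$ unless $Q' \equiv 0$, i.e.\ unless $Q$ is a constant. Conversely, $Q$ constant trivially gives $Q'(x_i) = 0$. Combined with the first paragraph this yields \eqref{fc7}$\iff$\eqref{fc8}. The argument is essentially a partial-fraction identity, so there is no serious obstacle; the only point requiring a little care is the verification of the formula $Q'(x_i) = P'(x_i) S_i$, where one must check that the $l\neq i$ contributions in the expansion of $Q'$ collapse (exactly the terms with $k = i$ survive, since any other choice of $k$ produces a factor $(x_i - x_i)$). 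Everything else is degree counting.
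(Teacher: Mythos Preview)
The paper does not supply its own proof of this proposition; it merely cites \cite[Proposition 5.8.3]{CLW} and states the result. Your argument is correct and self-contained: setting $x_i=\wp(a_i)$, $y_i=\wp'(a_i)$ and introducing $Q(x)=\sum_i y_i\prod_{j\ne i}(x-x_j)$, you correctly identify \eqref{fc8} with the condition that $Q$ be constant (via the triangular Laurent expansion at infinity) and \eqref{fc7} with $Q'(x_i)=0$ for all $i$ (via the computation $Q'(x_i)=P'(x_i)\sum_{j\ne i}(y_i+y_j)/(x_i-x_j)$, which is right because for $l\ne i$ only the $k=i$ summand survives); the degree count $\deg Q'\le n-2$ then closes the equivalence. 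There is nothing in the paper to compare against, and no gap in what you wrote.
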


%It is not obvious that (\ref{fc7}) is equivalent to (\ref{fc8}). Let
%$\wp(a_{i})=x_{i}$, $\wp^{\prime}(a_{i})=y_{i}$ and consider (\ref{fc7}),
%(\ref{fc8}) as linear equations of $y_{i}$ (i.e. $x_{i}$ is given such that
%$x_{i}\not =x_{j}$ for $i\not =j$) respectively:%
%\begin{equation}
%\sum_{j\not =i}\frac{y_{i}+y_{j}}{x_{i}-x_{j}}=0,\text{ }\forall1\leq i\leq
%n,\label{fc9}%
%\end{equation}
%and%
%\begin{equation}
%\sum_{i=1}^{n}x_{i}^{l}\cdot y_{i}=0,\text{ \ }\forall0\leq l\leq
%n-2.\label{fc10}%
%\end{equation}
%Then without requiring $(x_{i},y_{i})$ satisfying the cubic equation
%(\ref{40-4}) of $\wp(z)$, it was proved in \cite[Proposition 5.8.3]{CLW} that
%these two linear systems are equivalent, which implies Proposition
%\ref{proppp}.

Let $a \in (E^\times)^n \setminus \Delta_n$ satisfy (\ref{ff3}) and suppose that it is not a branch point. Then \eqref{fc8} implies that
\begin{equation} \label{g=CP-1}
g_{a}(z):=\sum_{i=1}^{n}\frac{\wp'(a_{i})}{\wp(z) - \wp(a_{i})} =\frac{C(a)} {\prod_{i = 1}^{n}(\wp(z) - \wp(a_{i}))}
\end{equation}
for a constant $C(a)\neq0$. Equivalently,
\begin{equation} \label{Ca0}
C(a) = \sum_{i=1}^{n} \wp'(a_{i}) \prod_{j\neq i} (\wp(z) - \wp(a_{j})).
\end{equation}
There are various ways to represent $C(a)$ by plugging in different values of
$z$ in (\ref{Ca0}). For example, for $z=a_{i}$ we get
\begin{equation} \label{Ca-1}
C(a)=\wp^{\prime}(a_{i})\prod_{j\neq i}(\wp(a_{i})-\wp(a_{j}))
\end{equation}
which is independent of the choices of $i$. Notice that if $a$ is a branch point then $g_{a}(z)\equiv 0$ and so $C(a) = 0$.

Then we have the following important result:

\begin{theorem} \cite{CLW} \label{thm3} 
There exists a polynomial $\ell_n(B) = \ell_{n}(B; g_2, g_3) \in \mathbb{Q}[g_{2}, g_{3}][B]$ of degree $2n + 1$ in $B$ such that if $a \in (E^\times)^n \setminus \Delta_n$ satisfies \eqref{ff3}, then $C^{2} = \ell_{n}(B)$, where $C = C(a)$ and $B = B_{a}$ are given in \eqref{Ca-1} and \eqref{b-a} respectively.
\end{theorem}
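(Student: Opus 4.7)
The plan is to use the symmetric-square construction for the Lamé operator. Under \eqref{ff3}, both $y_a$ and $y_{-a}$ solve the same Lamé equation $\mathcal{L}_{n, B_a}$. The classical identity $\sigma(z - a)\sigma(z + a) = -\sigma(z)^2 \sigma(a)^2 (\wp(z) - \wp(a))$ gives
$$y_a(z)\, y_{-a}(z) = A \cdot P(\wp(z)), \qquad A := (-1)^n \prod_{i = 1}^n \sigma(a_i)^2, \qquad P(x) := \prod_{i = 1}^n (x - \wp(a_i)),$$
and a direct repetition of the addition-formula calculation that produced \eqref{g=CP-1} identifies the Wronskian as $\tilde W(y_a, y_{-a}) = -A\, C(a)$, a nonzero constant when $a$ is not a branch point.

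Since $y_a, y_{-a}$ share a second-order linear ODE, $v := y_a y_{-a}$ satisfies the symmetric-square relation $\tilde W^2 = v'^2 - 2 v v'' + 4(n(n + 1)\wp + B) v^2$. Substituting $v = A \cdot P(\wp(z))$, cancelling $A^2$, and using $\wp'^2 = 4\wp^3 - g_2 \wp - g_3$ and $\wp'' = 6\wp^2 - g_2/2$ converts this into the polynomial identity in $x = \wp(z)$
$$F(x) \equiv C(a)^2,$$
where
$$F(x) := (4 x^3 - g_2 x - g_3)\bigl(P'(x)^2 - 2 P(x) P''(x)\bigr) - (12 x^2 - g_2)\, P(x) P'(x) + 4 (n(n + 1) x + B)\, P(x)^2.$$
A direct check shows that the coefficient of $x^{2n + 1}$ in $F(x)$ vanishes identically, so $\deg F \le 2n$, and the identity amounts to $2n$ polynomial constraints in the elementary symmetric functions $e_1, \ldots, e_n$ of the $\wp(a_i)$, together with $B, g_2, g_3$, plus the constant-term equation $F(0) = C^2$.

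Vanishing of the $x^{2n}$-coefficient reproduces $B = (2n - 1) e_1$, i.e.\ \eqref{b-a}; the next $n - 1$ coefficients can be solved recursively for $e_2, \ldots, e_n$ as polynomials in $B, g_2, g_3$, since the top contribution to each of them is a nonzero rational multiple of the next $e_k$. The remaining $n - 1$ coefficient equations should then hold automatically, a redundancy reflecting Proposition \ref{proppp}: the algebraic subsystem \eqref{fc8} cuts out a 1-dimensional locus parametrized by $B$. Substituting the expressions $e_k = e_k(B; g_2, g_3)$ back into $F(0)$ exhibits $C^2 = \ell_n(B; g_2, g_3) \in \mathbb{Q}[g_2, g_3][B]$, and the degree $\deg_B \ell_n = 2n + 1$ drops out of the weighted grading in which $B, g_2, g_3, e_k, C$ carry weights $2, 4, 6, 2k, 2n + 1$ respectively. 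The main obstacle is rigorously establishing the recursive solvability and the redundancy of the leftover coefficient equations; this is the algebraic heart of the argument and is where Proposition \ref{proppp} plays its essential role.
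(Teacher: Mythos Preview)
The paper does not prove Theorem~\ref{thm3}; it quotes the result from \cite{CLW} and merely notes that ``a recursive construction can be found in \cite[Theorem~7.4]{CLW}.'' Your symmetric-square argument is precisely that classical recursive construction, so you are reconstructing the cited proof rather than proposing an alternative. The sigma-function identity, the Wronskian identification $W(y_a, y_{-a}) = -A\,C(a)$, and the derivation of $F(x) \equiv C(a)^2$ are all correct.

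Your stated ``main obstacle'' is half real, half illusory. The illusory half is the redundancy of the leftover coefficient equations. You begin by \emph{assuming} \eqref{ff3}; hence $y_{\pm a}$ genuinely solve $\mathcal{L}_{n, B_a}$, and $F(x)$ \emph{is} the constant $C(a)^2$. All $2n$ positive-degree coefficients already vanish, and there is nothing to prove about the last $n$ of them (not $n - 1$, incidentally). Proposition~\ref{proppp} plays no role here. The real half is the triangular structure: you must check that for each $2 \le k \le n$ the coefficient of $x^{2n - k + 1}$ in $F$, viewed as a polynomial in $e_1, \ldots, e_n$, is affine in $e_k$ with a nonzero rational leading term depending only on $n$ and $k$. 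Once that is verified, the actual symmetric functions $e_k(a)$ are forced to equal the recursive values $\tilde e_k(B; g_2, g_3)$ by uniqueness, and $C(a)^2 = F(0)$ becomes the desired polynomial $\ell_n(B)$ upon substitution. This is a direct (if tedious) bookkeeping computation in the style of your $x^{2n}$-check, not a conceptual gap. One further detail: weighted homogeneity gives only $\deg_B \ell_n \le 2n + 1$; equality also requires the top coefficient to be nonzero, which comes from tracking the leading $B$-power through the same recursion.
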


This polynomial $\ell_{n}(B)$ is called the \emph{Lam\'{e} polynomial} in the
literature. 

Let $Y_n = Y_{n}(\tau) \subset {\rm Sym}^n\,E = E^n/S_n$ be the set of $a=\{a_{1},\cdots, a_{n}\}$ which satisfies \eqref{fc5} and \eqref{ff3}. Clearly $-a:=\{-a_{1},\cdots, -a_{n}\} \in Y_{n}$ if $a\in Y_{n}$, and $a\in Y_{n}$ is a \emph{branch point} if $a = -a$ in $E$. Then the map $B: Y_{n}\rightarrow \mathbb{C}$ in \eqref{b-a} is a ramified covering of degree $2$, and Theorem \ref{thm3} implies that
\[
Y_{n} \cong \left \{  (B, C) \mid C^{2} = \ell_{n}(B) \right \},
\]
(cf.~\cite[Theorem 7.4]{CLW}). Therefore, $Y_{n}$ is a hyperelliptic curve, known as the \emph{Lam\'e curve}. Furthermore, \emph{$Y_{n}$ is singular at a trivial critical point $a$ if and only if $B_{a}$ is a multiple zero of $\ell_n(B)$}. For later usage, we denote
$$
X_{n} := \{ a \in Y_{n} \mid \mbox{$a$ is not a branch point} \} \subset Y_n.
$$

Since $a$ is a branch point of $Y_n$ if and only if it is a trivial critical point of $G_n$. From now on we will switch these two notions freely.

There are several ways to compute the Lam\'{e} polynomial $\ell_{n}(B)$. A recursive construction can be found in \cite[Theorem 7.4]{CLW}.

\begin{example} \cite{Beukers-Waall, CLW} \label{lame-curve}
$\ell_n(B)$ for $n = 1, 2$. Denote $e_k=\wp(\frac{\omega_k}{2})$ for $k = 1, 2, 3$.

(1) $n = 1$, $\bar X_1 \cong E$, $C^2 = \ell_1(B) = 4 B^3 - g_2 B - g_3 = 4\prod_{i = 1}^3(B - e_i)$.

(2) $n = 2$ (notice that $e_1 + e_2 + e_3 = 0$), 
\begin{equation*}
\begin{split}
C^2 = \ell_2(B) &= \tfrac{4}{81} B^5 - \tfrac{7}{27} g_2 B^3 + \tfrac{1}{3} g_3 B^2 + \tfrac{1}{3} g_2^2 B - g_2 g_3 \\
&= \frac{2^2}{3^4}(B^2 - 3g_2) \prod_{i = 1}^3 (B + 3 e_i).
\end{split}
\end{equation*} 

Consequently, $\ell_2(B; \tau)$ has multiple zeros if and only if $g_2(\tau) = 0$, that is $\tau$ is equivalent to $e^{\pi i/3}$ under the ${\rm SL}(2, \mathbb{Z})$ action. 

If $a=\{a_1, a_2\}$ is a branch point of $Y_2$, then $\{a_1, a_2\} = \{-a_1, -a_2\}$ in $E$ implies that either (1) $a =\{\frac{1}{2}\omega_i,\frac{1}{2}\omega_j\}$ with $\{i,j,k\}=\{1,2,3\}$, which corresponds to $B_a=3(e_i+e_j)=-3e_k$, or (2) $a_1=-a_2\neq \frac{\omega_k}{2}$. Then $\pm\sqrt{3g_2} = B_a = 6\wp(a_1)$, i.e.~$\wp(a_1) = \pm\sqrt{g_2/12}$. We conclude that the branch points of $Y_2$ are given by $\{(\tfrac{1}{2} \omega_i, \tfrac{1}{2} \omega_j) \mid i \ne j\}$ and $\{(q_{\pm}, -q_{\pm}) \mid \wp(q_{\pm}) = \pm \sqrt{g_2/12}\}$.
\end{example}

\section{The invariant $D(a)$ and its geometric meaning} \label{D-for-n}
\setcounter{equation}{0}

The purpose of this section is to generalize the invariant $D(a)$ studied in \cite{LW4} for $\rho = 8\pi$, where $a$ is a half-period point, to the general case $\rho = 8\pi n$ for all $n \in \Bbb N$. $D(a)$ is fundamental in analyzing the bubbling behavior of a sequence $u_k$ with $\rho_k \to 8\pi n$. By Theorem A, the bubbling loci $a = \{a_1, \cdots, a_n\}$ must be a branch point of $Y_n$ if $\rho_k \ne 8\pi n$ for $k$ large. Thus it is essential to study the geometric meaning of $D(a)$ at those $2n + 1$ branch points as in the case $n = 1$ in \cite[Theorem 0.4]{LW4}.

For $a = (a_1, \cdots, a_n) \in (E^\times)^n \setminus \Delta_n$ a trivial critical point, we recall \eqref{D-invariant}:
\begin{equation} \label{def-D}
D(a) := \lim_{r \to 0} \sum_{i = 1}^n \mu_i \left( \int_{\Omega_i \setminus B_r(a_i)} \frac{e^{f_{a_i}(z)} - 1}{|z - a_i|^4} - \int_{\Bbb R^2 \setminus \Omega_i} \frac{1}{|z - a_i|^4} \right),
\end{equation}
where $f_{a_i}(z)$, $\mu_i$ are defined in (\ref{f-q}) and (\ref{mu}) respectively.
Notice that the sum in the RHS of (\ref{def-D}) can be written as
$$
\sum_{i = 1}^n \left(\int_{\Omega_i \setminus B_r(a_i)} \frac{\mu_i e^{f_{a_i}(z)}}{|z - a_i|^4} - \int_{\Bbb R^2 \setminus B_r(a_i)} \frac{\mu_i}{|z - a_i|^4} \right),
$$
where
\begin{equation}\label{kz0}
K(z) := \frac{\mu_i e^{f_{a_i}(z)}}{|z - a_i|^4} = \exp \Big(8\pi \sum_{j = 1}^n G(z, a_j) - 8\pi n G(z) \Big)
\end{equation}
is independent of $i$. Hence \eqref{def-D} of is independent of the choices of $\Omega_i$'s.

From now on, we use notation $p = \{p_1, \cdots, p_n\}$ instead of $a = \{a_1, \cdots, a_n\}$ to denote branch points.
Assume that $p = \{p_1, \cdots, p_n\} \in Y_n\setminus X_n$ is a branch point. Then $\{p_1, \cdots, p_n \} = \{-p_1, \cdots, -p_n \}$ and
\begin{equation}\label{kz}
\begin{split}
K(z) &= \exp 4\pi \Big(\sum_{j = 1}^n \big(G(z, p_j) + G(z, -p_j) - 2 G(z) \big) \Big) \\
&= e^c \prod_{i = 1}^n |\wp(z) - \wp(p_i)|^{-2}
\end{split}
\end{equation}
for some constant $c\in\mathbb{R}$. The last equality follows by the comparison of singularities. We remark here that, in comparison with \cite[\S 2]{LW4}, for non-half period points the simultaneous appearance of $\pm p_i$ is essential to arrive at the above simple looking closed form.

For convenience, we define $\Lambda_2 = \{\,i\mid p_i \in E[2]\,\}$, the two-torsion part, and for $i \not\in \Lambda_2$ we define $i^* \not \in \Lambda_2$ to be the index so that $p_{i^*} = -p_i$. %Here $E_{\tau}[2]:=\{0,\tfrac{\omega_k}{2}\mid k=1,2,3\}+\Lambda_{\tau}$ is the set of lattice points and $2$-torsion points of $E_{\tau}$.

Choose a sequence $a^k \in X_n$ with $\lim_{k \to \infty} a^k = p$. For ease of notations we drop the index $k$ and simply denote $a = (a_1, \cdots, a_n) \to (p_1, \cdots, p_n)$.

In \S\ref{geom-Xn} we show that $a \in X_n$ is equivalent to the following equation:
\begin{equation} \label{g=CP}
g_a(z) := \sum_{i = 1}^n \frac{\wp'(a_i)}{\wp(z) - \wp(a_i)}= \frac{C(a)}{\prod_{i = 1}^n (\wp(z) - \wp(a_i))}
\end{equation}
(so that ${\rm ord}_{z = 0}\,g_a(z) = 2n$) for a constant $C(a) \ne 0$ given by
$$
C(a) = \wp'(a_i) \prod_{j \ne i} (\wp(a_i) - \wp(a_j)),\quad \mbox{for any $i = 1, \ldots, n$}.
$$
For $a \in Y_n$, $C(a) = 0$ if and only if $a$ is a branch point. It is easy to describe the behavior of the limit $C(a) \to C(p) = 0$ as $a \to p$:

\begin{lemma} \label{Ca-asymp}
Let $p \in Y_n \setminus X_n$ and $a \in X_n$ near $p$. If $i \in \Lambda_2$ then
\begin{equation} \label{C-L2}
C(a) = \wp''(p_i) \prod_{j \ne i} (\wp(p_i) - \wp(p_j)) (a_i - p_i) + o(|a_i - p_i|),
\end{equation}
and if $i \not\in \Lambda_2$ then
\begin{equation} \label{C-nL2}
C(a) = \wp'(p_i)^2 \prod_{j \ne i,\,i^*} (\wp(p_i) - \wp(p_j)) (a_i + a_{i^*}) + o(|a_i + a_{i^*}|).
\end{equation}
\end{lemma}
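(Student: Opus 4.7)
The plan is to apply the identity $C(a) = \wp'(a_i) \prod_{j \ne i}(\wp(a_i) - \wp(a_j))$ from \eqref{Ca-1}, which is valid for \emph{any} index $i$, to pick $i$ tailored to each case, and to Taylor-expand the single factor that vanishes as $a \to p$. Two preliminary observations dictate the case split. First, for $j \ne i$, $\wp(p_i) = \wp(p_j)$ iff $p_j = \pm p_i$; by distinctness of the $p_k$'s this forces $p_j = -p_i$, which is impossible when $i \in \Lambda_2$ and uniquely selects $j = i^*$ when $i \notin \Lambda_2$. Second, $\wp'(p_i) = 0$ iff $i \in \Lambda_2$, and at a half-period the zero of $\wp'$ is simple (indeed $\wp''(\omega_k/2) = 2(e_k - e_l)(e_k - e_m) \ne 0$), so $\wp''(p_i) \ne 0$ in that case. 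Thus in either case exactly one factor of $C(a)$ vanishes to leading order, and the remaining factors tend to nonzero constants.

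In the case $i \in \Lambda_2$, every factor $\wp(a_i) - \wp(a_j)$ with $j \ne i$ converges to the nonzero limit $\wp(p_i) - \wp(p_j)$, while the single vanishing factor $\wp'(a_i)$ expands as $\wp''(p_i)(a_i - p_i) + O((a_i - p_i)^2)$. Combining these expansions immediately yields \eqref{C-L2}.

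In the case $i \notin \Lambda_2$, one has $\wp'(a_i) \to \wp'(p_i) \ne 0$, and the unique vanishing factor in the product is $\wp(a_i) - \wp(a_{i^*})$. Set $\delta_i := a_i - p_i$ and $\delta_{i^*} := a_{i^*} + p_i$, so that $\delta_i + \delta_{i^*} = a_i + a_{i^*}$. Using oddness of $\wp'$,
\[ \wp(a_i) - \wp(a_{i^*}) = \wp'(p_i)\delta_i - \wp'(-p_i)\delta_{i^*} + O(\delta_i^2 + \delta_{i^*}^2) = \wp'(p_i)(a_i + a_{i^*}) + O(\delta_i^2 + \delta_{i^*}^2). \]
Multiplying by $\wp'(a_i) \to \wp'(p_i)$ and by the nonvanishing limit $\prod_{j \ne i,\,i^*}(\wp(p_i) - \wp(p_j))$ produces the leading term of \eqref{C-nL2}, provided that the remainder can be absorbed into $o(|a_i + a_{i^*}|)$.

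The main technical obstacle is precisely this absorption in the second case: a priori, $a_i + a_{i^*}$ could be much smaller than $\delta_i$ or $\delta_{i^*}$ individually. Here the hypothesis $a \in X_n$ is essential, for $a$ then varies on the one-dimensional Lam\'e curve $Y_n$, along which $\delta_i$ and $\delta_{i^*}$ are constrained by the $n - 1$ independent equations \eqref{ff3}. When $B_p$ is a simple zero of $\ell_n$, $Y_n$ is smooth at $p$ and—because $\wp'(p_i) \ne 0$—the function $a_i + a_{i^*}$ serves as a local uniformizer near $p$, so $|\delta_i| + |\delta_{i^*}| = O(|a_i + a_{i^*}|)$ and hence $O(\delta_i^2 + \delta_{i^*}^2) = o(|a_i + a_{i^*}|)$, as required. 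At a singular branch point one instead parametrizes each analytic branch of $Y_n$ separately and reaches the same conclusion on each branch, which suffices since the claimed asymptotic is a branch-wise statement.
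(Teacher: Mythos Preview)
The paper does not actually supply a proof of this lemma; it prefaces the statement with ``It is easy to describe the behavior of the limit $C(a)\to C(p)=0$ as $a\to p$'' and moves on. Your argument is therefore more than the paper offers, and the strategy---fixing the index $i$ in \eqref{Ca-1} and Taylor-expanding the unique vanishing factor---is exactly the natural one and correct.

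One point deserves a sharper justification. In the case $i\notin\Lambda_2$ you correctly isolate the real issue: the quadratic remainder $O(\delta_i^2+\delta_{i^*}^2)$ must be shown to be $o(|a_i+a_{i^*}|)$. Your sentence ``because $\wp'(p_i)\ne 0$---the function $a_i+a_{i^*}$ serves as a local uniformizer near $p$'' reads as slightly circular, since that is essentially the conclusion. The clean way to close the loop is a bootstrap: on a smooth (branch of) $Y_n$ with local parameter $C$, holomorphy gives $\delta_j=O(|C|)$ for every $j$, so your expansion already yields
\[
C \;=\; K\,(a_i+a_{i^*}) + O(|C|^2),\qquad K=\wp'(p_i)^2\!\!\prod_{j\ne i,i^*}\!\!(\wp(p_i)-\wp(p_j))\ne 0,
\]
which forces $|a_i+a_{i^*}|\asymp |C|$; hence $O(\delta_i^2+\delta_{i^*}^2)=O(|C|^2)=o(|a_i+a_{i^*}|)$. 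This is implicit in what you wrote, and once spelled out the proof is complete. Your remark that at a singular branch point one argues on each analytic branch separately is adequate (and is in fact how the paper later treats such $p$ in Theorem~\ref{DD-conj}).
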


\begin{lemma} \label{residue=0}
For $p \in Y_n$ being a branch point, the residue for
$$
P_p(z) := \prod_{i = 1}^n (\wp(z) - \wp(p_i))^{-1}
$$
at $p_i$ is zero for all $i = 1, \ldots, n$.
\end{lemma}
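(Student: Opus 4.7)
My plan is to prove the lemma by a limiting argument from the smooth stratum $X_n$. For $a \in X_n$ the function $P_a(z) := \prod_{i = 1}^n (\wp(z) - \wp(a_i))^{-1}$ has only simple poles whose residues admit a one-line closed form; pairing off poles as $a \to p$ will then produce the desired cancellation via the residue theorem.

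Fix any sequence $a \to p$ with $a \in X_n$ (such sequences exist because the branch locus on $Y_n$ is a finite set, cut out by $C = 0$, and even at a singular branch point of $Y_n$ one can approach along a smooth branch). By \eqref{fc6} the $2n$ points $\pm a_1, \dots, \pm a_n$ are all distinct in $E$, so $P_a$ has only simple poles at these $2n$ points. Using the representation \eqref{Ca-1} of $C(a)$ together with $\wp(-a_i) = \wp(a_i)$ and $\wp'(-a_i) = -\wp'(a_i)$, a direct calculation yields
\[
\operatorname{Res}_{z = a_i} P_a(z) = \frac{1}{C(a)}, \qquad \operatorname{Res}_{z = -a_i} P_a(z) = -\frac{1}{C(a)}, \qquad 1 \leq i \leq n.
\]

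Next I would track how these $2n$ poles collide onto the $n$ branch-point values as $a \to p$. The branch-point condition $\{p_1, \dots, p_n\} = \{-p_1, \dots, -p_n\}$ forces exactly two of the $\pm a_j$ to accumulate at each $p_i$: if $p_i \in \Lambda_2$, then $a_i$ and $-a_i$ both tend to $p_i$; while if $p_i \notin \Lambda_2$, then $a_i$ and $-a_{i^*}$ both tend to $p_i$. In either case one of the two colliding poles is a ``$+a_j$''-type and the other is a ``$-a_j$''-type, so the sum of their residues is $\tfrac{1}{C(a)} + \bigl(-\tfrac{1}{C(a)}\bigr) = 0$, \emph{independently of} $a$.

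To conclude, choose a small open disk $U_i \subset E$ about $p_i$ such that $\overline{U_i}$ contains no other point of $\{\pm p_j \mid 1 \le j \le n\}$. For $a$ close enough to $p$, the only poles of $P_a$ inside $U_i$ are the two colliding ones described above, whose residues sum to $0$, so by the residue theorem $\oint_{\partial U_i} P_a(z)\,dz = 0$. Since $\wp(z) - \wp(a_j) \to \wp(z) - \wp(p_j)$ uniformly on the compact set $\partial U_i$, which by construction is bounded away from every $\pm p_j$, we have $P_a \to P_p$ uniformly on $\partial U_i$, and passing to the limit gives
\[
\operatorname{Res}_{z = p_i} P_p(z) = \frac{1}{2\pi i}\oint_{\partial U_i} P_p(z)\,dz = \lim_{a \to p}\frac{1}{2\pi i}\oint_{\partial U_i} P_a(z)\,dz = 0.
\]
The only delicate step is matching which poles of $P_a$ approach which $p_i$ in the two cases above; once this combinatorial book-keeping is made explicit, the signed residue formula for $P_a$ makes the cancellation automatic and unifies the half-period and non-half-period cases.
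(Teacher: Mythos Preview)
Your proof is correct and, while it shares the paper's starting move of approximating $p$ by $a \in X_n$, the execution is genuinely different. The paper rewrites $P_a = g_a/C(a)$ as a sum of simple fractions, splits into the cases $i \in \Lambda_2$ and $i \notin \Lambda_2$, and then uses the precise first-order asymptotics of Lemma~\ref{Ca-asymp} to compute the limiting partial fractions term by term, after which the residue is read off from a Taylor expansion in each case. Your argument instead computes the residues of $P_a$ at all $2n$ simple poles in one stroke (each is $\pm 1/C(a)$ by \eqref{Ca-1}), observes that the two poles coalescing at any $p_i$ always carry opposite signs, and passes to the limit via a contour integral and uniform convergence on $\partial U_i$. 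This bypasses Lemma~\ref{Ca-asymp} entirely and unifies the half-period and non-half-period cases; the trade-off is that the paper's calculation, being more explicit, essentially prefigures the expansion coefficients $c_i$ needed immediately afterwards in \eqref{Pp}--\eqref{ci-nL2}, whereas your route proves the vanishing cleanly but yields no such by-product.
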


\begin{proof}
Choose $a \in X_n$ with $a \to p$ as above. We compute from (\ref{g=CP}) that
\begin{equation*}
\begin{split}
P_a(z) = \frac{g_a(z)}{C(a)} &= \frac{1}{C(a)} \sum_{i \in \Lambda_2} \frac{\wp'(a_i)}{\wp(z) - \wp(a_i)} \\
&\quad + \frac{1}{2C(a)} \sum_{i \not\in \Lambda_2} \Big(\frac{\wp'(a_i)}{\wp(z) - \wp(a_i)} + \frac{\wp'(a_{i^*})}{\wp(z) - \wp(a_{i^*})} \Big).
\end{split}
\end{equation*}
By Lemma \ref{Ca-asymp}, the first sum has limit
$$
\sum_{i \in \Lambda_2} \frac{\prod_{j \ne i} (\wp(p_i) - \wp(p_j))^{-1}}{\wp(z) - \wp(p_i)}
$$
when $a \to p$, which obviously has zero residue at $p_i$ because $i \in \Lambda_2$ means $p_i=\frac{1}{2}\omega_k$ in $E$ for some $k\in \{1,2,3\}$.

For the second sum, we rewrite each $i$-th summand as
$$
\frac{1}{2C} \frac{\wp'(a_i) - \wp'(-a_{i^*})}{\wp(z) - \wp(a_i)} - \frac{\wp'(a_{i^*})}{2C} \frac{\wp(a_i) - \wp(a_{i^*})}{(\wp(z) - \wp(a_i))(\wp(z) - \wp(-a_{i^*}))},
$$
which has limit
$$
\frac{1}{2} \left(\frac{\wp''(p_i)}{\wp'(p_i)^2} \frac{1}{\wp(z) - \wp(p_i)} + \frac{1}{(\wp(z) - \wp(p_i))^2}\right) \prod_{j \ne i,\,i^*} (\wp(p_i) - \wp(p_j))^{-1}.
$$
A direct Taylor expansion shows that the residues of both terms at $p_i$ ($i\notin \Lambda_2$) cancel out with each other. This proves the lemma.
\end{proof}

By Lemma \ref{residue=0}, we may rewrite
\begin{equation} \label{Pp}
P_p(z) = \prod_{i = 1}^n (\wp(z) - \wp(p_i))^{-1} = \sum_{j = 1}^n c_j \wp(z - p_j) + c_0.
\end{equation}
Since the vanishing order of the LHS at $z = 0$ is $2n$, the coefficients must satisfy the constraints
\begin{equation}\label{iii1}
\sum_{j = 1}^n c_j \wp(p_j) + c_0 = 0,
\end{equation}
\begin{equation*}
\sum_{j = 1}^n c_j \wp^{(k)}(-p_j) = 0,\quad\text{for}\;\,k = 1, \ldots, 2n - 1.\end{equation*}
Also, it is easy to see from (\ref{Pp}) that for $i \in \Lambda_2$,
\begin{equation} \label{ci-L2}
c_i = 2 \wp''(p_i)^{-1} \prod_{j \ne i} (\wp(p_i) - \wp(p_j))^{-1},
\end{equation}
and if $i \not \in \Lambda_2$ then
\begin{equation} \label{ci-nL2}
c_i = \wp'(p_i)^{-2} \prod_{j \ne i,\,i^*} (\wp(p_i) - \wp(p_j))^{-1}.
\end{equation}
In particular $c_{i^*} = c_i$.

This vector $\vec c = (c_1, \cdots, c_n)$ indeed has important geometric meaning:

\begin{lemma} \label{a'(p)}
By considering $C$ as the local holomorphic coordinate of the hyperelliptic curve $Y_n \ni a(C)$ near a branch point $p$, then we have $a'(0) = \vec{c}/2$. Moreover,
$$
\frac{\p a_j}{\p C}(0) = \frac{c_j}{2} \not\in \{0, \infty\}
$$
for $j = 1, \cdots, n$.
\end{lemma}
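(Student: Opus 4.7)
The plan is to extract $a'(0)$ from the asymptotic expansions of $C(a)$ given in Lemma \ref{Ca-asymp}, treating the two-torsion indices $i\in\Lambda_2$ and the non-two-torsion indices $i\notin\Lambda_2$ separately. Under the standing hypothesis that $C$ serves as a local holomorphic coordinate near $p$ -- equivalently, that $B_p$ is a simple root of $\ell_n(B)$ so $Y_n$ is smooth at $p$ -- one may choose a labeling of the unordered tuple for which each $a_j(C)$ is a holomorphic function of $C$ with $a_j(0)=p_j$, and all the factors appearing below are finite and nonzero.

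For $i\in\Lambda_2$, the asymptotic \eqref{C-L2} inverts directly to
\[
\frac{\partial a_i}{\partial C}(0) \;=\; \frac{1}{\wp''(p_i)\prod_{j\ne i}(\wp(p_i)-\wp(p_j))},
\]
and comparison with \eqref{ci-L2} identifies this with $c_i/2$. For $i\notin\Lambda_2$ the analogous expansion \eqref{C-nL2} (together with \eqref{ci-nL2}) only sees the symmetric combination and produces the pair-sum $\partial(a_i+a_{i^*})/\partial C(0)=c_i=c_{i^*}$. To split this into the two individual derivatives I would invoke the hyperelliptic involution $\sigma:Y_n\to Y_n$, $a\mapsto -a$, which preserves $B$ but sends $C\mapsto -C$ (the defining formula $C=\wp'(a_i)\prod_{j\ne i}(\wp(a_i)-\wp(a_j))$ carries a single odd factor $\wp'$). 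Equivariance of the inclusion $Y_n\hookrightarrow\mathrm{Sym}^n E$ then forces $\{a_j(-C)\}_j=\{-a_j(C)\}_j$ as unordered tuples; matching the unique component approaching $p_i$ on each side -- $a_i(-C)$ on the left, $-a_{i^*}(C)$ on the right -- gives $a_i(-C)=-a_{i^*}(C)$, and differentiating at $C=0$ yields $a_i'(0)=a_{i^*}'(0)$, so each equals $c_i/2$.

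Non-vanishing and finiteness of every $\partial a_j/\partial C(0)$ are then immediate from the explicit formulas \eqref{ci-L2} and \eqref{ci-nL2}. The main subtle point is precisely the symmetry argument for $i\notin\Lambda_2$: the scalar $C(a)$ only sees the pair $a_i+a_{i^*}$ to first order, so recovering the individual first-order behavior of $a_i$ genuinely requires the involution $\sigma$ together with a careful bookkeeping of which branch of the component map approaches which $p_i$ near the branch point.
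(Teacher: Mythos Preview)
Your proposal is correct and follows essentially the same approach as the paper: both use Lemma~\ref{Ca-asymp} to read off $a_i'(0)$ directly for $i\in\Lambda_2$, and for $i\notin\Lambda_2$ both combine the pair-sum asymptotic \eqref{C-nL2} with the hyperelliptic involution $C\mapsto -C$ (yielding $a_i(-C)=-a_{i^*}(C)$, hence $a_i'(0)=a_{i^*}'(0)$) to split the sum into the individual derivatives. The only difference is the order of presentation---the paper establishes the symmetry $a_i'(0)=a_{i^*}'(0)$ first and then invokes the asymptotics, whereas you do the reverse---which is purely cosmetic.
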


\begin{proof}
We first show that if $i \not\in \Lambda_2$ then
\begin{equation} \label{a=c}
\frac{\p a_i}{\p C}(0) = \frac{\p a_{i^*}}{\p C}(0).
\end{equation}
Suppose that $a(C) = (a_i(C))$ represents the point $(B, C) \in Y_n$ close to $p$, where $B = (2n - 1)\sum_{i = 1}^n \wp(a_i(C))$. Then $\tilde a(C) = (a_i(-C))$ represent the other point $(B, -C)$ with the same $B$. That is, $B = (2n - 1) \sum_{i = 1}^n \wp(a_i(-C))$ too. By the hyperelliptic structure on $Y_n$, we conclude that
$$
\{a_1(-C), \cdots, a_n(-C)\} = \{-a_1(C), \cdots, -a_n(C)\}.
$$

If $i \not\in \Lambda_2$, then we must have $a_i(-C) = -a_{i^*}(C)$ and $a_{i^*}(-C) = -a_i(C)$. Therefore, $a_i(-C) + a_{i^*}(-C) = -(a_{i}(C) + a_{i^*}(C))$ and
$$
a_i(-C) - a_{i^*}(-C) = a_i(C) - a_{i^*}(C).
$$
That is, $a_i(C) - a_{i^*}(C)$ is even in $C$, which implies (\ref{a=c}).

The lemma now follows from (\ref{C-L2})-(\ref{C-nL2}) in Lemma \ref{Ca-asymp}. For example, if $i\in\Lambda_2$, then (\ref{C-L2}) implies $\lim_{C\to 0}\frac{a_i(C)-p_i}{C}=\frac{c_i}{2}$. If $i\notin\Lambda_2$, then (\ref{a=c}) and (\ref{C-nL2}) imply
$$2\frac{\p a_i}{\p C}(0)=\frac{\p a_i}{\p C}(0)+\frac{\p a_{i^*}}{\p C}(0)=\lim_{C\to 0}\frac{a_i(C)+a_{i^*}(C)}{C}=c_i.$$
Notice that the property $c_j \ne 0, \infty$ for all $j$ is clear from the expressions in (\ref{ci-L2}) and (\ref{ci-nL2}) since (i) $p_i \not\in \Lambda$ for all $i$ and $\wp(p_i) \ne \wp(p_j)$ for all $i \ne j$, and (ii) $\wp''(p_i) \ne 0$ for $i \in \Lambda_2$ and $\wp'(p_i) \ne 0$ for $i \not\in \Lambda_2$.
\end{proof}

Using the tangent vector $\vec c$, we may derive a simple formula for $D(p)$.

\begin{theorem} \label{D(p)-formula}
Let $p \in Y_n \setminus X_n$ be a branch point of the hyperelliptic curve $Y_n$ defined by $C^2 = \ell_n(B)$. Consider the local parameter $C$ near $p$ and let $\vec c = 2a'(0) = 2\p a/\p C|_{C = 0}$. Denote also by $s = \sum_{j = 1}^n c_j$ and $c_0 = -\sum_{j = 1}^n c_j \wp(p_j)$. Then
\begin{equation} \label{D(p)-n}
\begin{split}
D(p) &= \operatorname{Im}\tau \cdot e^c \Big(|c_0 - s\eta_1|^2 + \frac{2\pi}{\operatorname{Im}\tau}{\rm Re}\,\bar s (c_0 - s\eta_1) \Big) \\&= \operatorname{Im}\tau\cdot e^c |s|^2 \Big(\Big|\frac{c_0}{s} - \eta_1\Big|^2 + \frac{2\pi}{\operatorname{Im}\tau}{\rm Re}\,\Big(\frac{c_0}{s} - \eta_1\Big) \Big).
\end{split}
\end{equation}
\end{theorem}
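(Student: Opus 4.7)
The strategy is to use the closed form $K(z) = e^c|F(z)|^2$ from \eqref{kz}--\eqref{Pp}, where
$$
F(z) := P_p(z) = \sum_{j=1}^n c_j\wp(z-p_j) + c_0,
$$
and to convert the regularized integral $\int K\,dA$ into a sum of boundary integrals via Stokes' theorem. Since $\wp = -\zeta'$, the elliptic function $F$ admits the explicit primitive $\Psi(z) := -\sum_{j=1}^n c_j\zeta(z-p_j) + c_0 z$ with $\Psi'(z) = F(z)$, and a direct computation gives the exact form
$$
|F|^2\,dA = \tfrac{i}{2}\,d(\Psi\,\overline{F}\,d\bar z).
$$
This will be applied on $\Omega_r := E \setminus \bigcup_j B_r(p_j)$, after cutting the torus along $A$- and $B$-cycles so that $\Psi$ becomes single-valued on the cut domain.

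First I record the quasi-periodicity of $\Psi$. Using $\zeta(z+\omega_k) = \zeta(z) + \eta_k$,
$$
\Psi(z+1) - \Psi(z) = c_0 - s\eta_1 =: \alpha, \qquad \Psi(z+\tau) - \Psi(z) = c_0\tau - s\eta_2 =: \beta.
$$
The Legendre relation $\eta_1\tau - \eta_2 = 2\pi i$ rewrites the second jump as $\beta = \alpha\tau + 2\pi i s$; this identity is what forces $\eta_2$ to disappear from the final formula.

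Next I execute Stokes. The boundary of $\Omega_r$ consists of (i) the four sides of a fundamental parallelogram, traversed counterclockwise, and (ii) the $n$ circles $\partial B_r(p_j)$, traversed clockwise. For (i), the periodicity of $\overline{F}$ lets opposite sides pair up; combined with the elementary identities $\int_0^1\overline{F}(t)\,dt = \bar\alpha$ and $\int_0^1\overline{F}(t\tau)\,dt = \bar\beta/\bar\tau$ (both consequences of $\Psi' = F$), one obtains
$$
\tfrac{i}{2}\int_{\partial E}\Psi\,\overline{F}\,d\bar z = \tfrac{i}{2}(\alpha\bar\beta - \bar\alpha\beta) = -\operatorname{Im}(\alpha\bar\beta) = |\alpha|^2\operatorname{Im}\tau + 2\pi\operatorname{Re}(\alpha\bar s)
$$
after substituting $\bar\beta = \bar\alpha\bar\tau - 2\pi i\bar s$. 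For (ii), set $w = z - p_j$ and Laurent expand
$$
\Psi = -\frac{c_j}{w} + A_j + R_j(p_j)\,w + O(w^2), \qquad \overline{F} = \frac{\bar c_j}{\bar w^2} + \overline{R_j(p_j)} + O(\bar w),
$$
where $R_j(z) = \sum_{k\ne j}c_k\wp(z-p_k) + c_0$. Because $\oint_{|w|=r} w^a\bar w^b\,d\bar w$ vanishes unless $a = b+1$, only the product of leading terms $(-c_j/w)(\bar c_j/\bar w^2)$ contributes at leading order, yielding $\oint_{\partial B_r(p_j)}\Psi\,\overline{F}\,d\bar z = 2\pi i|c_j|^2/r^2 + O(r^2)$.

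Combining both contributions,
$$
\int_{\Omega_r} K\,dA = e^c\bigl[|\alpha|^2\operatorname{Im}\tau + 2\pi\operatorname{Re}(\bar s\alpha)\bigr] + \sum_{j=1}^n\frac{\pi e^c|c_j|^2}{r^2} + O(r^2).
$$
Matching leading singularities gives $\mu_j = e^c|c_j|^2$, hence $\int_{\mathbb{R}^2\setminus B_r(p_j)}\mu_j/|z-p_j|^4\,dA = \pi\mu_j/r^2$, and the $r^{-2}$ terms cancel exactly in the definition \eqref{def-D} of $D(p)$. Passing to the limit $r\to 0$ gives the claimed formula. The main technical obstacle is the Stokes bookkeeping on the cut torus: tracking orientation signs while playing the multi-valuedness of $\Psi$ against the strict periodicity of $\overline{F}$, and extracting precisely the Laurent monomials at each puncture that separate the divergent $r^{-2}$ part from the finite residue.
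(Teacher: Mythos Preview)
Your argument is correct and is essentially the same as the paper's: the paper also takes the primitive $L_p(z)=-\sum_j c_j\zeta(z-p_j)+c_0 z$ (your $\Psi$), applies Green/Stokes on a cut fundamental domain minus the disks $B_r(p_j)$, identifies the outer-boundary contribution with the quasi-periods $\chi_1,\chi_2$ (your $\alpha,\beta$) and simplifies via the Legendre relation, while the small-circle expansions produce the $\pi|c_j|^2/r^2$ terms that cancel against $\pi\mu_j/r^2$ after matching $\mu_j=e^c|c_j|^2$. The only cosmetic difference is that the paper writes the integrand in real form $|P_p|^2=(uu_x)_x+(uu_y)_y$ and computes $\int u\,dv$, whereas you package the same identity as $|F|^2\,dA=\tfrac{i}{2}\,d(\Psi\,\overline F\,d\bar z)$.
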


\begin{proof}
By Lemma \ref{a'(p)}, $\vec c$ coincides with the vector formed by the coefficients $c_1, \cdots, c_n$ appeared in the expansion formula of $P_p(z)$ in (\ref{Pp}).

Let $T \subset \Bbb R^2$ be a fundamental domain of $E_\tau$ with $p \cap \p T = \emptyset$. Then
\begin{align} \label{D(p)}
D(p) &= \lim_{r \to 0} \left( e^c \int_{T \setminus \cup_i B_r(p_i)} |P_p(z)|^2 - \sum_{i = 1}^n \int_{\Bbb R^2 \setminus B_r(p_i)}\frac{\mu_i}{|z - p_i|^4} \right)\\
&=\lim_{r\to 0}\left(  e^{c}\int_{T\setminus \cup_{i=1}%
^{n}B_{r}(p_{i})}|P_{p}(z)|^{2}-\sum_{i=1}^{n}\frac{\pi \mu_{i}}{r^{2}}\right).\nonumber
\end{align}

Consider an anti-derivative of $P_p(z)$:
\begin{equation}\label{lp}
L_p(z) := \int_0^z P_p(w)\,dw = -\sum_{j = 1}^n c_j \zeta(z - p_j) + c_0 z.
\end{equation}
For $i = 1, 2$, we define the ``quasi-periods'' $\chi_i$ by
\begin{equation}\label{chi-i}
\begin{split}
\chi_i = L_p(z + \omega_i) - L_p(z) = c_0 \omega_i - s\eta_i.
\end{split}
\end{equation}

To compute $D(p)$, we note from (\ref{Pp}) that
\[
P_{p}(z)=\frac{c_{i}}{(z-p_{i})^{2}}+O(1)
\]
and from (\ref{kz0}), the definition (\ref{mu}) of $\mu_i$ that
\[
K(z)=\frac{\mu_{i}}{|z-p_{i}|^{4}}+O(|z-p_{i}|^{-2}).
\]
Inserting these into (\ref{kz}) leads to%
\begin{equation}\label{mu-ci}
\mu_{i}=e^{c}|c_{i}|^{2},\quad\forall 1\leq i\leq n.
\end{equation}
Now we denote%
\[
L_{p}(z)=u+\sqrt{-1}v,\quad z=x+\sqrt{-1}y.
\]
Then $P_{p}(z)=L_{p}^{\prime}(z)=u_{x}-iu_{y}$, i.e.%
\[
|P_{p}(z)|^{2}=u_{x}^{2}+u_{y}^{2}=(uu_{x})_{x}+(uu_{y})_{y}\text{ for
}z\text{ outside }\{p_{1},\cdot \cdot \cdot,p_{n}\},
\]
so%
\begin{align*}
&  \int_{T\setminus \cup_{i=1}^{n}B_{r}(p_{i})}|P_{p}(z)|^{2}\\
=&\int_{\partial T}\big(uu_{x}dy-uu_{y}dx\big)-\sum_{i=1}^{n}%
\int_{|z-p_{i}|=r}\big(uu_{x}dy-uu_{y}dx\big).
\end{align*}
Applying (\ref{chi-i}) we obtain
\begin{align}\label{inte}
\int_{\partial T}\big(uu_{x}dy-uu_{y}dx\big) &  =\int_{\partial
T}udv=-\frac{1}{2}\operatorname{Im}\int_{\partial T}L_{p}%
d\bar{L}_{p}\\
&  =\frac{1}{2}\operatorname{Im}(\bar{\chi}_{1}\chi_{2}-\chi_{1}\bar{\chi}%
_{2}).\nonumber
\end{align}
Since near $p_{i}$,%
\[
u+\sqrt{-1}v=L_{p}(z)=-\frac{c_{i}}{z-p_{i}}+f(z),
\]
where $f(z)$ is holomorphic in a neighborhood of $p_{i}$, it is easy to prove
that%
\[
-\int_{|z-p_{i}|=r}\big(uu_{x}dy-uu_{y}dx\big)=-\int_{|z-p_{i}|=r}%
udv=\frac{\pi|c_{i}|^{2}}{r^{2}}+O(r).
\]
Therefore, we conclude from (\ref{mu-ci}) and (\ref{inte}) that
\begin{align}\label{regulator}
D(p) &  =\lim_{r\rightarrow0}\left(  e^{c}\int_{T\setminus \cup
_{i=1}^{n}B_{r}(p_{i})}|P_{p}(z)|^{2}-\sum_{i=1}^{n}\frac{\pi \mu_{i}}{r^{2}%
}\right)  \\
&  =\frac{e^{c}}{2}\operatorname{Im}(\bar{\chi}_{1}\chi_{2}-\chi_{1}\bar{\chi
}_{2}).\nonumber
\end{align}

By direct substitution, we compute
\begin{equation*}
\begin{split}
\bar\chi_1 \chi_2 - \chi_1 \bar \chi_2 &= |c_0|^2 (\bar\omega_1 \omega_2 - \omega_1 \bar \omega_2) + |s|^2 (\bar\eta_1 \eta_2 - \eta_1 \bar\eta_2) \\
&\quad + \bar c_0 s(\eta_1 \bar\omega_2 - \eta_2 \bar\omega_1) - c_0 \bar s(\bar \eta_1 \omega_2 - \bar \eta_2 \omega_1).
\end{split}
\end{equation*}
Now we plug in $\omega_1 = 1$, $\omega_2 = \tau = a + bi$, and use the Legendre relation $\eta_1 \omega_2 - \eta_2 \omega_1 = 2\pi i$. Then
\begin{equation*}
\begin{split}
\bar\omega_1 \omega_2 - \omega_1 \bar \omega_2 &= 2ib, \\
\bar\eta_1 \eta_2 - \eta_1 \bar\eta_2 &= 2i(b|\eta_1|^2 - \pi(\eta_1 + \bar\eta_1)), \\
\eta_1 \bar\omega_2 - \eta_2 \bar\omega_1 &= 2i(\pi - \eta_1 b).
\end{split}
\end{equation*}
Hence
\begin{equation*}
\begin{split}
D(p) &= e^c \Big(b(|c_0|^2 + |s\eta_1|^2) + 2 {\rm Re}\big(c_0 \bar s (\pi - \bar\eta_1 b) - |s|^2 \pi \eta_1 \big)\Big)\\
&= b e^c \Big(|c_0 - s\eta_1|^2 + \frac{2\pi}{b}{\rm Re}\,\bar s (c_0 - s\eta_1) \Big).
\end{split}
\end{equation*}
This proves the theorem.
\end{proof}

In fact there is a simple geometric interpretation of the expression appeared in the RHS of (\ref{D(p)-n}).

\begin{proposition} \label{Jacobian}
Consider the vector-valued map $(E^\times)^n \to \Bbb R^2$ defined by
$$
a \mapsto \phi(a):= -4\pi \sum_{i = 1}^n \nabla G(a_i).
$$
Let $C = u + iv \mapsto a(C) \in E^n$ be a local holomorphic parametrization of a Riemann surface $V \subset E^n$. Then the Jacobian $J(\phi\circ a)(u, v)$ is given by
\begin{equation}
\det \Big( \frac{\p\phi}{\p u}, \frac{\p\phi}{\p v}\Big) = -\Big(|c_0 - s\eta_1|^2 + \frac{2\pi}{\operatorname{Im}\tau} {\rm Re}\,\bar s(c_0 - s\eta_1)\Big),
\end{equation}
where $\vec c = (c_i) := 2a'(C)$, $s := \sum_{i = 1}^n c_i$, and $c_0 := -\sum_{i = 1}^n c_i \wp(a_i)$.
\end{proposition}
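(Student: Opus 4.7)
The plan is to identify $\mathbb{R}^2$ with $\mathbb{C}$ via $(x,y) \mapsto x + iy$, rewrite $\phi$ as a complex-valued function $\Phi$, and compute the $2\times 2$ Jacobian through the identity $\det(\Phi_u, \Phi_v) = \operatorname{Im}(\overline{\Phi_u}\Phi_v)$. Since $G$ is real, $G_x + iG_y = 2\overline{\p_z G}$, so combining this with \eqref{G_z} gives
\begin{equation*}
\Phi := \phi_1 + i\phi_2 = -4\pi \sum_{i=1}^n (G_x + iG_y)(a_i) = \overline{H(C)},
\end{equation*}
where $H(C) := 2\sum_{i=1}^n \bigl(\zeta(a_i) - a_i\eta_1 + 2\pi i\,t_i\bigr)$ and we write $a_i = r_i + t_i\tau$ with $r_i, t_i \in \mathbb{R}$.

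Next, split $H = H_0 + H_1$ where $H_0 := 2\sum_i (\zeta(a_i) - a_i\eta_1)$ is holomorphic in $a$, and $H_1 := (2\pi/\operatorname{Im}\tau)(S - \bar S)$ with $S := \sum_i a_i$, so that all non-holomorphic dependence on $C$ is concentrated in the $\bar S$ term. Using that $a(C)$ is holomorphic with $a_i'(C) = c_i/2$, and applying $\zeta' = -\wp$ together with the definitions $s = \sum_i c_i$ and $c_0 = -\sum_i c_i\wp(a_i)$, direct differentiation yields
\begin{equation*}
A := \p_C H = (c_0 - s\eta_1) + \frac{\pi s}{\operatorname{Im}\tau}, \qquad B := \p_{\bar C} H = -\frac{\pi \bar s}{\operatorname{Im}\tau}.
\end{equation*}

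Finally, using $\p_u = \p_C + \p_{\bar C}$ and $\p_v = i(\p_C - \p_{\bar C})$ we obtain $H_u = A + B$ and $H_v = i(A - B)$. Since $\Phi = \bar H$, the Jacobian equals
\begin{equation*}
J(\phi\circ a) = \operatorname{Im}(\overline{\Phi_u}\Phi_v) = \operatorname{Im}(H_u\overline{H_v}) = |B|^2 - |A|^2,
\end{equation*}
the last equality by a short expansion (the cross terms $A\bar B - B\bar A$ contribute only to the real part). Expanding $|A|^2$ cancels the $|B|^2$ contribution and produces exactly $-\bigl(|c_0 - s\eta_1|^2 + (2\pi/\operatorname{Im}\tau)\operatorname{Re}\,\bar s(c_0 - s\eta_1)\bigr)$. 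The main subtlety is bookkeeping of the non-holomorphic piece $H_1$ coming from the $\operatorname{Im}(z)$-contribution in \eqref{G_z}; without it the Jacobian would reduce to $-|c_0 - s\eta_1|^2$ and miss the linear correction term.
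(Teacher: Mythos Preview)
Your proof is correct and follows essentially the same route as the paper: both compute the components of $\phi$ from \eqref{G_z}, differentiate along the holomorphic parametrization, and evaluate the $2\times 2$ determinant directly. The only difference is organizational---the paper works with the four real partials $\partial_u\phi_1,\partial_v\phi_1,\partial_u\phi_2,\partial_v\phi_2$ and expands the determinant by hand, whereas you package everything via the Wirtinger derivatives $A=\partial_C H$, $B=\partial_{\bar C}H$ and the identity $J=|B|^2-|A|^2$, which is a slightly cleaner bookkeeping of the same computation.
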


\begin{proof}
Denote $a_j = x_j + \sqrt{-1} y_i$, $b=\operatorname{Im}\tau$ and $\phi = (\phi_1, \phi_2)^T$. By (\ref{G_z}), we have
\begin{equation} \label{grad-phi}
\begin{split}
\phi_1 &= 2\,{\rm Re}\,(\sum_i \zeta(a_i) - \eta_1 a_i),\\
\phi_2 &= -2\,{\rm Im}\,(\sum_i \zeta(a_i) - \eta_1 a_i) - \frac{4\pi}{b} \sum y_i.
\end{split}
\end{equation}
The chain rule shows that
\begin{equation*}
\begin{split}
\p_u \phi_1 &= -2\,{\rm Re}\Big[\sum_i (\wp(a_i) + \eta_1)\frac{\p a_i}{\p C}\Big] = {\rm Re}\,(c_0 - s\eta_1),\\
\p_v \phi_1 &= -2\,{\rm Re}\Big[\sum_i (\wp(a_i) + \eta_1) \frac{\p a_i}{\p C}\sqrt{-1}\Big] = -{\rm Im}\,(c_0 - s\eta_1),\\
\p_u \phi_2 &= 2\,{\rm Im}\Big[\sum_i (\wp(a_i) + \eta_1) \frac{\p a_i}{\p C} \Big] - \frac{4\pi}{b} \sum_i \frac{\p y_i}{\p u} \\
&= -{\rm Im}\,(c_0 - s\eta_1) - \frac{2\pi}{b}{\rm Im}\,s, \\
\p_v \phi_2 &= 2\,{\rm Im}\Big[\sum_i (\wp(a_i) + \eta_1) \frac{\p a_i}{\p C} \sqrt{-1}\Big] - \frac{4\pi}{b} \sum_i \frac{\p y_i}{\p v} \\
&= -{\rm Re}\,(c_0 - s\eta_1) - \frac{2\pi}{b} {\rm Re}\,s.
\end{split}
\end{equation*}
Hence the Jacobian is given by
\begin{equation*}
\begin{split}
&-|c_0 - s\eta_1|^2 - \frac{2\pi}{b} ({\rm Re}\,(c_0 - s\eta_1)\, {\rm Re}\,s + {\rm Im}\,(c_0 - s\eta_1)\, {\rm Im}\,s) \\
&= - \Big(|c_0 - s\eta_1|^2 + \frac{2\pi}{b} {\rm Re}\,\bar s(c_0 - s\eta_1)\Big)
\end{split}
\end{equation*}
as expected.
\end{proof}

\begin{corollary} \label{D=Jac}
For $p \in Y_n \setminus X_n$ with local coordinate $C$, we have
\begin{equation} \label{D=J}
D(p) = -{\rm Im}\tau\, e^c J(\phi\circ a)(0)
\end{equation}
for some constant $c$.
\end{corollary}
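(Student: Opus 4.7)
The plan is essentially a one-line synthesis of Theorem \ref{D(p)-formula} and Proposition \ref{Jacobian}, so the proposal is short and the ``hard part'' is really just verifying that the notation in the two results matches.

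First I would apply Proposition \ref{Jacobian} to the Riemann surface $V = X_n \subset (E^\times)^n$ with the local holomorphic parametrization $C \mapsto a(C)$ constructed near the branch point $p$, taking $(u,v) = (\operatorname{Re} C, \operatorname{Im} C)$. This immediately yields
\begin{equation*}
J(\phi \circ a)(0) = -\Big(|c_0 - s\eta_1|^2 + \tfrac{2\pi}{\operatorname{Im}\tau}\operatorname{Re}\bar{s}(c_0 - s\eta_1)\Big),
\end{equation*}
where, in the notation of the Proposition, $\vec c = 2a'(0)$, $s = \sum_j c_j$, and $c_0 = -\sum_j c_j \wp(a_j(0)) = -\sum_j c_j \wp(p_j)$.

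Next I would reconcile the symbols. Lemma \ref{a'(p)} identifies the tangent vector $2 a'(0)$ with the coefficient vector $(c_1,\ldots,c_n)$ appearing in the partial fraction expansion (\ref{Pp}) of $P_p(z)$; this is precisely the $\vec c$ used in Theorem \ref{D(p)-formula}. With this identification, the scalars $s$ and $c_0$ in Proposition \ref{Jacobian} coincide literally with those in Theorem \ref{D(p)-formula}, and the constraint (\ref{iii1}) is just the defining relation for $c_0$.

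Finally, I would read off the corollary. Theorem \ref{D(p)-formula} states
\begin{equation*}
D(p) = \operatorname{Im}\tau \cdot e^c \Big(|c_0 - s\eta_1|^2 + \tfrac{2\pi}{\operatorname{Im}\tau}\operatorname{Re}\bar{s}(c_0 - s\eta_1)\Big),
\end{equation*}
with $c$ the constant from (\ref{kz}), and the bracketed factor is exactly $-J(\phi\circ a)(0)$ by the first step. Substituting gives $D(p) = -\operatorname{Im}\tau\cdot e^c\,J(\phi\circ a)(0)$, as required. The only subtlety worth remarking on is conceptual rather than technical: $\phi$ itself is defined on all of $(E^\times)^n$, not just on $Y_n$, so the Jacobian of its restriction to the parametrized curve $a(C)$ at $C = 0$ is an honest real $2 \times 2$ determinant and there is no ambiguity in writing $J(\phi\circ a)(0)$; it is this geometric interpretation that makes the corollary the natural reformulation of Theorem \ref{D(p)-formula} for later use in relating $D(p)$ to the Hessian $H(p)$.
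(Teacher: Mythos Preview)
Your proposal is correct and follows exactly the paper's approach: the paper's own proof is the single sentence ``This follows from Theorem \ref{D(p)-formula} and Proposition \ref{Jacobian},'' and your write-up is a faithful unpacking of that sentence, including the identification via Lemma \ref{a'(p)} of the tangent vector $2a'(0)$ with the coefficient vector $\vec c$ from \eqref{Pp}. One tiny notational slip: the Riemann surface $V$ to which you apply Proposition \ref{Jacobian} is (a local branch of) $Y_n$ near $p$, not $X_n$, since $p\notin X_n$; this does not affect the argument.
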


\begin{proof}
This follows from Theorem \ref{D(p)-formula} and Proposition \ref{Jacobian}.
\end{proof}

Corollary \ref{D=Jac} will play important role in our subsequent degeneration analysis of these branch points $p \in Y_n \setminus X_n$. One may also interpret the above proof of it as a stationary phase integral calculation.

\begin{example}
For $n = 1$, $c_0 = -c_1\wp(p_1) = -c_1 e_i$ if $p_1 = \tfrac{1}{2} \omega_i$, and $s = c_1$. The formula reduces to the one for $\det D^2 G(p)$ first studied in \cite{LW}:
$$
|e_i + \eta_1|^2 - \frac{2\pi}{{\rm Im}\tau}{\rm Re}(e_i + \eta_1).
$$
\end{example}

\section{Proof of Theorem \ref{thm1}: Analytic adjunction} \label{s:proof}

It is elementary to see that for $\chi_1 = a_1 + b_1i$ and $\chi_2 = a_2 + b_2 i$,
$$
\bar\chi_1 \chi_2 - \chi_1 \bar \chi_2 = 2i(a_1b_2 - a_2 b_1).
$$
Hence the formula in (\ref{regulator}) says that $D(p)$ is exactly $e^c$ times the signed area spanned by $\chi_1$ and $\chi_2$ in $\Bbb R^2$. Indeed, $\chi_1 = c_0 - s\eta_1 = -\sum_{j = 1}^n c_j (\wp(p_j) + \eta_1)$. So we may rewrite (\ref{D(p)-n}) as
\begin{equation} \label{D(p)-det}
D(p) = -{\rm Im}\tau\, e^c |s|^2 \begin{vmatrix} -{\rm Re}\,\chi_1 s^{-1} & +{\rm Im}\,\chi_1 s^{-1} \\ +{\rm Im}\,\chi_1 s^{-1} & \displaystyle {\rm Re}\,\chi_1 s^{-1} + \frac{2\pi}{{\rm Im}\tau}\end{vmatrix}.
\end{equation}

Formula (\ref{D(p)-det}) suggests the possibility for interpreting $D(p)$ in terms of the determinant of the Hessian of some ``Green function'' for general $n \in \Bbb N$. To find such a Green function on $\bar X_n$ will require the search for a suitable conformal metric on it. Alternatively we consider the multiple Green function $G_n$ defined in (\ref{multiple-G}):
$$
G_n(z_1, \cdots, z_n) := \sum_{i < j} G(z_i - z_j) - n \sum_{i = 1}^n G(z_i)
$$
for $z = (z_1, \cdots, z_n) \in (E^\times)^n \setminus \Delta_n$.
Then
$G_n$ is a Green function on $E^n$ with divisor $D_n$ where $(E^\times)^n \setminus \Delta_n = E^n \setminus D_n$. Recall that \emph{$p$ is a branch point of $Y_n$ if and only if it is a trivial critical point of $G_n$}.

\begin{theorem} [Analytic adjunction formula] \label{DD-conj}
For any fixed $n \in \Bbb N$ and any branch point $p = (p_1, \cdots, p_n) \in Y_n$, there is a constant $c_p \ge 0$ such that
\begin{equation*}
\det D^2 G_n(p) = (-1)^n c_p D(p).
\end{equation*}
Moreover, $c_p = 0$ precisely when the associated hyperelliptic curve $Y_n(\tau)$ for $E = E_\tau$ is singular at $p$. There are only finitely many such tori $E_\tau$ for each $n$.
\end{theorem}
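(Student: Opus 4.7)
The plan is to implement the \emph{analytic adjunction} outlined in the introduction: decompose the critical point system $\nabla G_n(a)=0$ according to the hyperelliptic structure. As discussed in \S\ref{geom-Xn}, these $2n$ real equations are equivalent to the $n-1$ complex algebraic constraints
$$
F_l(a):=\sum_{i=1}^n \wp'(a_i)\wp(a_i)^l=0,\qquad 0\le l\le n-2,
$$
which cut out $Y_n$ by Proposition \ref{proppp}, together with the two real equations $\phi(a)=0$. The algebraic equivalence is mediated by the addition formula for $\zeta$ combined with \eqref{G_z}, and should show that near any branch point $p$ there is a smooth $2n\times 2n$ matrix $M(a)$ with
$$
dG_n(a)=M(a)\cdot(F_0,\ldots,F_{n-2},\phi_1,\phi_2)^T.
$$
Explicitly constructing $M$ via the $\zeta$-addition identity is the first step, and it suffices because both sides share the same trivial critical locus to first order.

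Next I would introduce local complex coordinates $(w_1,\ldots,w_{n-1},C)$ on $E^n/S_n$ near $p$ in which $w=0$ defines $Y_n$ and $C$ is the hyperelliptic parameter of Theorem \ref{thm3}. Since each $F_l$ vanishes identically on $Y_n$, $\partial F_l/\partial C|_{w=0}\equiv 0$, so the Jacobian of $(F_l,\phi)$ with respect to $(w,C)$ is block triangular at $p$. Differentiating $dG_n=M\cdot(F_l,\phi)^T$ at the critical point $p$ (where $(F_l,\phi)$ vanishes) yields
$$
\det D^2 G_n(p)=\det M(p)\cdot\det_{\mathbb{R}}\!\Big(\tfrac{\partial(\mathrm{Re}\,F,\mathrm{Im}\,F)}{\partial w}\Big)\cdot\det_{\mathbb{R}}\!\Big(\tfrac{\partial\phi}{\partial C}\Big).
$$
Because each $F_l$ is holomorphic in $w$, the first real Jacobian equals $|\det_{\mathbb{C}}(\partial F/\partial w)|^2\ge 0$, and by the Jacobian criterion vanishes exactly when $Y_n$ is singular at $p$, i.e., when $B_p$ is a multiple root of $\ell_n(B)$. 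The second Jacobian is $J(\phi\circ a)(0)=-(\mathrm{Im}\,\tau)^{-1}e^{-c}D(p)$ by Corollary \ref{D=Jac}. Collecting the factors into
$$
c_p:=(-1)^{n+1}\det M(p)\cdot|\det_{\mathbb{C}}(\partial F/\partial w)|^2\cdot\frac{e^{-c}}{\mathrm{Im}\,\tau}
$$
produces the adjunction identity $\det D^2 G_n(p)=(-1)^n c_p D(p)$ and simultaneously the degeneracy criterion $c_p=0\iff Y_n$ singular at $p$. Finiteness of such $\tau$ then follows because the $B$-discriminant of $\ell_n(B;\tau)$ is a nontrivial modular object on $\mathbb{H}$ with a discrete zero set.

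The main obstacle is the sign claim $c_p\ge 0$, which reduces to proving $\mathrm{sgn}(\det M(p))=(-1)^{n+1}$. This requires tracking the Vandermonde-type combinatorics relating the gradient basis $\{\partial/\partial a_i\}$ to the elementary symmetric basis underlying $\{F_l\}$, together with the real-versus-complex signature exchange when passing from $\mathbb{C}^n$ to $\mathbb{R}^{2n}$. A natural testing ground is the low-rank case $n=2$, where the Lam\'e polynomial is explicit (Example \ref{lame-curve}); verifying the sign and the closed form of $c_p$ there (as in the forthcoming Example \ref{DD2}) will both confirm the sign analysis and produce the promised formula for $c_p$. A secondary technical point is the smooth existence of $M$ through coalescences among the $p_i$'s, which is controlled by the asymptotic information in Lemmas \ref{Ca-asymp} and \ref{a'(p)}.
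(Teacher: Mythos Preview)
Your high-level strategy---split the critical system into $n-1$ holomorphic equations cutting out $Y_n$ plus the non-holomorphic $\phi=0$, then block-triangularize the Hessian and invoke Corollary~\ref{D=Jac}---is exactly the paper's. But your choice of the $n-1$ holomorphic equations is wrong, and this is not the ``secondary technical point'' you label it.

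You take $F_l(a)=\sum_i\wp'(a_i)\wp(a_i)^l$ from Proposition~\ref{proppp}. That proposition, however, assumes $\wp(a_i)\ne\wp(a_j)$ for $i\ne j$, and this \emph{fails at every branch point that is not purely half-period}: if $p_{i^*}=-p_i\notin E[2]$ then $\wp(p_i)=\wp(p_{i^*})$. The consequence is fatal already for $n=2$ at $p=(q,-q)$ with $\wp''(q)=0$: there $\nabla F_0(p)=(\wp''(q),\wp''(-q))=(0,0)$, so the hypersurface $\{F_0=0\}$ is \emph{singular} at $p$ even when $Y_2$ is smooth (generic $\tau$). Hence your factor $|\det_{\mathbb C}(\partial F/\partial w)|^2$ vanishes there, forcing $c_p=0$ in contradiction with the theorem. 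Equivalently, no smooth matrix $M$ with $\nabla G_n=M\cdot(F,\phi)^T$ can exist near such $p$: the right-hand Jacobian has a zero row at $p$, so $\det D^2G_n(p)$ would have to vanish, but formula~\eqref{det2-2} shows it does not. The passage from the $\zeta$-form~\eqref{ff3} to the $F_l$ via the addition law introduces denominators $\wp(a_i)-\wp(a_j)$ that blow up exactly here; this is not a coalescence of the $p_i$ themselves.

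The paper avoids all of this by using the $\zeta$-based functions $g^i$ of~\eqref{zeta-eqn} directly. These are holomorphic on all of $(E^\times)^n\setminus\Delta_n$, hence smooth near every branch point, and they honestly define $Y_n$. More importantly, the relation to $\nabla G_n$ is given by a \emph{constant} real $2n\times2n$ matrix $A$ (built from the row operations in~\eqref{gia} and the $\nabla G\leftrightarrow 2G_z$ swap), with $\det A=(-1)^{n-1}/n^2$. This makes your ``main obstacle''---the sign of $\det M(p)$---disappear: no Vandermonde bookkeeping is needed, and the $(-1)^n$ together with $c_p\ge 0$ drop out of the explicit formula~\eqref{cp-expression}. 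So the fix is not to patch $M$ through the coalescence, but to replace $(F_l)$ by $(g^i)$ from the outset.
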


For $n = 1$, this is \cite[Theorem 0.4]{LW4}. For $n = 2$, a direct check based on Theorem \ref{D(p)-formula} is still possible (c.f.~Example \ref{DD2}). For $n \ge 3$ the $D^2 G_n$ is a $2n \times 2n$ matrix and it is cumbersome to compute $\det D^2 G_n(p)$ directly. The proof of Theorem \ref{DD-conj} given below is based on Corollary \ref{D=Jac}.

\begin{proof}
It was proved in \cite[\S 5.3]{CLW} (recalled in (\ref{ff3})--(\ref{ff4})) that the system of equations (\ref{poho}) given by $-2\pi \nabla G_n(a) = 0$ is equivalent to holomorphic equations  $g^1(a) =\cdots = g^{n - 1}(a) = 0$ with
\begin{equation} \label{zeta-eqn}
g^i(a) = \sum_{j\ne i}^n (\zeta(a_i - a_j) + \zeta(a_j) - \zeta(a_i)),\quad 1\le i\le n-1,
\end{equation}
which defines $Y_n$, and the non-holomorphic equation $g^n(a) = 0$ with
\begin{equation}\label{g-n}
g^n(a) = \tfrac{1}{2}\phi(a) = -2\pi \sum_{i = 1}^n \nabla G(a_i).
\end{equation}
By (\ref{G_z}), we easily obtain for $1\le i\le n-1$,
\begin{equation}\label{gia}
g^{i}(a)=-2\pi\left(\sum_{j\neq i}2G_z(a_i-a_j)-2nG_z(a_i)+\sum_{j=1}^n 2G_z(a_j)\right).
\end{equation}
For any $i$, we have
$$
\nabla_i G_n(a)=\sum_{j\neq i}\nabla G(a_i-a_j)-n\nabla G(a_i).
$$
By taking into account that $\nabla G \mapsto 2G_z$ has matrix $\begin{pmatrix} 1 & 0 \\ 0 & -1 \end{pmatrix}$ and $g^n = \frac{2\pi}{n}\sum_{i = 1}^n \nabla_{i} G_n$, the equivalence between the map $a \mapsto g(a) := (g^1(a), \cdots,$ $g^n(a))^T$ and $-2\pi\nabla G_n$ is induced by a real $2n \times 2n$ matrix $A$ given by
\begin{equation*}
A= \begin{bmatrix}
1 & & & & & 1 &  \\
& 1 & & & & & -1 \\
& & \ddots & & & \vdots & \vdots\\
& & & 1& & 1 &  \\
& & & & 1& & -1 \\
& & & & & 1 &  \\
& & & & & & 1
\end{bmatrix}
\cdot
\begin{bmatrix}
1 & & & & &  &  \\
& -1 & & & & &  \\
& & \ddots & & & & \\
& & & 1& &  &  \\
& & & & -1& &  \\
\tfrac{-1}{n}& & \cdots& \tfrac{-1}{n}& & \tfrac{-1}{n} &  \\
&\tfrac{-1}{n} & \cdots& & \tfrac{-1}{n}& & \tfrac{-1}{n}
\end{bmatrix}.
\end{equation*}
In other words, by considering
$g^{k}=({\rm Re}g^{k}, {\rm Im}g^{k})^T$ for $1\le k\le n-1$ and $G_z=({\rm Re}G_z, {\rm Im}G_z)^T$, we have $2G_z=\begin{pmatrix} 1 & 0 \\ 0 & -1 \end{pmatrix}\nabla G$. Inserting this into (\ref{gia}), it is easy to obtain $g(a)=-2\pi A \nabla G_n(a)$.
Consequently,
\begin{equation} \label{J(g)}
J(g)(a) = J(-2\pi A\nabla G_n)(a) = \frac{(-1)^{n - 1}}{n^2} (2\pi)^{2n}\det D^2 G_n(a),
\end{equation}
so it suffices to compute (the real Jacobian) $J(g)$.

Let $p \in Y_n \setminus X_n$ and consider a holomorphic parametrization $C \mapsto a(C)$ of a branch of $Y_n$ near $p$, where $p$ corresponds to $C = 0$. Notice that if $p$ is not a singular point of $Y_n$, i.e.~$B_p$ is a simple zero of $\ell_n(B) = 0$, then there is only one branch of $Y_n$ near $p$ and the map $C \to a(C)$ is unique.

We denote
$$
C = u + \sqrt{-1} v, \quad a_k = x^k + \sqrt{-1} y^k, \quad g^k = U^k + \sqrt{-1} V^k, \quad 1 \le k \le n.
$$
Along $Y_n$ we have by chain rule (denote $g^i_j = \p g^i/\p a_j$)
\begin{equation} \label{der-level}
0 = \frac{\p g^i}{\p C} = \sum_{j = 1}^n g^i_j\, \frac{\p a_j}{\p C}, \qquad 1 \le i \le n - 1.
\end{equation}
If $g^n$ is also holomorphic, then (\ref{der-level}) can be used to evaluate the ``complex determinant'' $\det D^{\Bbb C}g = \det (g^i_j)$ by elementary column operations. For example, if $\p a_k/\p C \ne 0$ then we may eliminate all the entries of the $k$-th column except the last ($n$-th) one. The case $k = n$ reads as:
\begin{equation} \label{adjunction}
\det D^{\Bbb C} g = \det(g^i_j)_{i, j = 1}^{n - 1} \times \frac{\p g^n}{\p C}\times\Big(\frac{\p a_n}{\p C}\Big)^{-1}.
\end{equation}

In the current case $g^n = \tfrac{1}{2}\phi$ is not holomorphic (see (\ref{grad-phi}) for the additional linear term $-2\pi \sum_k y^k/b$ for $V^n = \tfrac{1}{2}\phi_2$). The same argument via implicit functions still applies if we work with the real components $U^k, V^k$ and real variables $x^k, y^k$ and $u, v$ instead.

More precisely, (\ref{der-level}) takes the real form: For $1 \le i \le n - 1$,
\begin{equation} \label{der-level2}
\begin{split}
0 = \begin{bmatrix} U^i_u & U^i_v \\ V^i_u & V^i_v \end{bmatrix}
= \sum_{k = 1}^n
\begin{bmatrix} U^i_{x^k} & U^i_{y^k} \\ V^i_{x^k} & V^i_{y^k} \end{bmatrix} \begin{bmatrix} x^k_u & x^k_v \\ y^k_u & y^k_v \end{bmatrix}.
\end{split}
\end{equation}
The two rows are equivalent by the Cauchy--Riemann equation.

The elementary column operation on the $2n \times 2n$ real jacobian matrix $Dg$ is now replaced by the right multiplication with the matrix
\begin{equation*}
R_n := \begin{bmatrix}
1 & & & x^1_u & x^1_v \\
& 1 & & y^1_u & y^1_v \\
& & \ddots & \vdots & \vdots\\
& & & x^n_u & x^n_v \\
& & & y^n_u & y^n_v
\end{bmatrix}.
\end{equation*}
In fact we may do so for any $(2k - 1, 2k)$-th pair of columns---since
$$
\begin{vmatrix} x^k_u & x^k_v \\ y^k_u & y^k_v \end{vmatrix} = |a'_k(0)|^2 \ne 0, \infty
$$
by Lemma \ref{a'(p)}, and get a similar matrix $R_k$. We take $R = R_n$ below.

Denote by $D'g$ the principal $2(n - 1) \times 2(n - 1)$ sub-matrix of $Dg$. Notice from (\ref{g-n}) that
$$
\tfrac{1}{2} D(\phi\circ a) = \begin{bmatrix} U^n_u & U^n_v \\ V^n_u & V^n_v \end{bmatrix}
= \sum_{k = 1}^n
\begin{bmatrix} U^n_{x^k} & U^n_{y^k} \\ V^n_{x^k} & V^n_{y^k} \end{bmatrix} \begin{bmatrix} x^k_u & x^k_v \\ y^k_u & y^k_v \end{bmatrix},
$$
which is precisely the right bottom $2 \times 2$ sub-matrix of $(Dg)R$. Hence it follows from (\ref{der-level2}) that
$$
(Dg)R=\begin{bmatrix} D'g & 0 \\ * & \tfrac{1}{2} D(\phi\circ a) \end{bmatrix},
$$
which can be used to calculate the determinant:
$$
\det Dg\, \det R = \det ((Dg)R) = \det D' g\, \det \tfrac{1}{2} D(\phi\circ a).
$$
By (\ref{J(g)}) and Corollary \ref{D=Jac}, we get
\begin{equation*}
\det D^2 G_n(p) = \frac{(-1)^n n^2 e^{-c}}{4{\rm Im}\,\tau\, (2\pi)^{2n}} \frac{|\det D'^{\Bbb C} g(p)|^2}{|a_n'(0)|^2} \,D(p),
\end{equation*}
where $D'^{\Bbb C} g$ is the principal $(n - 1) \times (n - 1)$ sub-matrix of $D^{\Bbb C}g$. Here we used $\det D' g(p)=|\det D'^{\Bbb C} g(p)|^2$ because $g^k$ is holomorphic for any $1\le k\le n-1$. Thus
\begin{equation} \label{cp-expression}
c_p = \frac{n^2 e^{-c}}{4{\rm Im}\,\tau\, (2\pi)^{2n}} \frac{|\det D'^{\Bbb C} g(p)|^2}{|a_n'(0)|^2} \ge 0.
\end{equation}

To complete the proof of Theorem \ref{DD-conj}, we recall the standard Jacobian criterion for smoothness of the point $p \in Y_n$. Since $g^1 = 0, \cdots, g^{n - 1} = 0$ are the defining equations for $Y_n$, $p \in Y_n$ is a non-singular point if and only if there is some $(n - 1) \times (n  - 1)$ minor of the $(n - 1) \times n$ matrix $D^{\mathbb{C}}\tilde{g}(p)$ which does not vanish at $p$, where $\tilde{g}:=(g^1,\cdots,g^{n-1})^T$. Notice that (\ref{cp-expression}) is valid for all choices of those minors (with $a'_n(0)$ being replaced by $a'_k(0)$), thus $p \in Y_n$ is non-singular is indeed equivalent to $\det D'^{\Bbb C} g(p) \ne 0$ (which actually implies that any $(n - 1) \times (n  - 1)$ minor does not vanish at $p$). Since $p \in Y_n \setminus X_n$ is a branch point and $Y_n$ is defined by the hyperelliptic equation $C^2 = \ell_n(B)$, this is precisely the case when $B_p$ is a simple zero of $\ell_n(B) = 0$. The proof is complete.
\end{proof}

\begin{example} [The case $n = 2$] \label{DD2}
For any flat torus $E_\tau$ and $p \in Y_2(\tau) \setminus X_2(\tau)$, we compute directly the constant $c_p = c_p(\tau) \ge 0$ such that
$$
\det D^2 G_2(p) = c_p D(p).
$$
To serve as a consistency check with (\ref{cp-expression}) we will not follow the procedure used in the proof of Theorem \ref{DD-conj}. Instead we will compute $\det D^2 G_2(p)$ directly. It will be clear that $c_p(\tau) > 0$ if $\tau \not\equiv e^{\pi/3}$ under the ${\rm SL}(2, \Bbb Z)$ action.

By Example \ref{lame-curve} (2), we see that the five branch points in $Y_n$ are given by $\{(\tfrac{1}{2} \omega_i, \tfrac{1}{2} \omega_j) \mid i \ne j\}$ and $\{(q_{\pm}, -q_{\pm}) \mid \wp(q_{\pm}) = \pm \sqrt{g_2/12}\}$. Note that $\wp^2(q) = \tfrac{1}{12} g_2$ if and only if $\wp''(q) = 0$. The only case that these five points reduce to four points is when $g_2 = 0$. This happens precisely when $\tau \equiv e^{\pi/3}$ and then $\bar Y_n$ becomes a singular hyperelliptic curve.

To compute the Hessian of $G_2$, we recall the formulae \cite[(2.4) and (2.5)]{LW4} for the second partial derivatives of $G$. Namely,
\begin{equation} \label{Hessian}
\begin{split}
\det D^2 G = \frac{-1}{4\pi^2}\Big(|(\log \T)_{zz}|^2 + \frac{2\pi}{b} {\rm Re}\,(\log \T)_{zz}\Big),
\end{split}
\end{equation}
where $b={\rm Im}\,\tau$ and in terms of the Weierstrass theory
\begin{equation} \label{period-1/2}
(\log \T)_{zz} (\tfrac{1}{2} \omega_i) = -\wp(\tfrac{1}{2} \omega_i) - \eta_1 = -(e_i + \eta_1),
\end{equation}
where $(\log \T)_z (z) = \zeta(z) - \eta_1 z$ is used.

First we compute $D^2 G(p)$ for $p = (\tfrac{1}{2} \omega_i, \tfrac{1}{2} \omega_j)$. Denote by $\tfrac{1}{2}\omega_k$ the third remaining half period point. Notice that $\wp(\tfrac{1}{2} \omega_i - \tfrac{1}{2} \omega_j) = \wp(\tfrac{1}{2}\omega_k) = e_k$. For simplicity we write
$$
w_k := (\log\T)_{zz}(\tfrac{1}{2} \omega_k) = -(e_k + \eta_1) = u_k + v_k i,
$$
and similarly for the indices $i, j$. Then we have
\begin{equation*}
D^2G_2(p) = \frac{1}{2\pi}
\begin{pmatrix}
-u_k + 2u_i & v_k - 2v_i & u_k & -v_k \\
v_k - 2 v_i & u_k - 2 u_i - \frac{2\pi}{b} & -v_k & -u_k - \tfrac{2\pi}{b} \\
u_k & -v_k & -u_k + 2u_j & v_k - 2v_j \\
-v_k & -u_k - \tfrac{2\pi}{b} & v_k - 2v_j & u_k - 2u_j - \tfrac{2\pi}{b}
\end{pmatrix}.
\end{equation*}
A lengthy yet straightforward calculation shows that
\begin{multline} \label{det2}
\det D^2 G_2(p) = \\
\frac{4}{(2\pi)^4} \Big(|2e_i e_j + e_k^2 - 3e_k \eta_1|^2 + \frac{2\pi}{b} {\rm Re}\,\big(3\bar e_k(2e_i e_j + e_k^2 - 3e_k \eta_1)\big)\Big).
\end{multline}
The details will be omitted here. We only note that when $\tau \in i \Bbb R$, all $e_l$'s and $\eta_1$ are real numbers. Thus all the imaginary parts vanish: $v_1 = v_2 = v_3 = 0$. In this case (\ref{det2}) can be verified easily.

By (\ref{ci-L2}) and the fact that $\wp''(\tfrac{1}{2}\omega_i) = 2(e_i - e_j)(e_i - e_k)$, we compute
\begin{equation*}
\begin{split}
c_1 &= 2 \wp''(\tfrac{1}{2}\omega_i)^{-1} (e_i - e_j)^{-1} = (e_i - e_j)^{-2} (e_i - e_k)^{-1}, \\
c_2 &= (e_j - e_i)^{-2} (e_j - e_k)^{-1}, \\
s &= c_1 + c_2 = -3e_k (e_i - e_j)^{-2} (e_i - e_k)^{-1} (e_j - e_k)^{-1},\\
c_0 &= -(c_1 e_i + c_2 e_j) = - (2e_i e_j + e_k^2)(e_i - e_j)^{-2}(e_i - e_k)^{-1}(e_j - e_k)^{-1}.
\end{split}
\end{equation*}
By Theorem \ref{D(p)-formula}, we get
$$
D(p) = c(p) \Big(|2e_i e_j + e_k^2 - 3e_k \eta_1|^2 + \frac{2\pi}{b} {\rm Re}\,\big(3\bar e_k(2e_i e_j + e_k^2 - 3e_k \eta_1)\big)\Big)
$$
with $c(p) = b e^c |e_i - e_j|^{-2} |e_i - e_k|^{-1} |e_j - e_k|^{-1}$. Thus
$$
\det D^2 G_2(\tfrac{1}{2} \omega_i, \tfrac{1}{2} \omega_j) = c_p D(\tfrac{1}{2} \omega_i, \tfrac{1}{2} \omega_j)
$$
with
\begin{equation} \label{cp1}
c_p = (4 \pi^4 c(p))^{-1} = e^{-c}|e_i - e_j| |e_i - e_k| |e_j - e_k|/(4b\pi^4) > 0.
\end{equation}

Next we consider $p = (q, -q)$ with $q \in \{q_+, q_-\}$. Let $\mu = \wp(q)$. Since $\wp''(q) = 0$, we have also $\wp(2q) = -2\wp(q) = -2\mu$ by the addition (duplication) formula. Denote by $\mu = u + iv$ and $\eta_1 = s + it$. Then we have
\begin{equation*}
D^2G_2(p) =
\frac{1}{2\pi}
\begin{pmatrix}
-4u - s & 4v + t & 2u - s & -2v + t \\
4v + t & 4u + s - \tfrac{2\pi}{b} & -2v + t & -2u + s - \tfrac{2\pi}{b} \\
2u - s & -2v + t & -4u - s & 4v + t \\
-2v + t & -2u + s - \tfrac{2\pi}{b} & 4v + t & 4u + s - \tfrac{2\pi}{b}
\end{pmatrix}.
\end{equation*}
A straightforward calculation easier than the previous case shows that the determinant is given by
\begin{equation} \label{det2-2}
\begin{split}
\det D^2G_2(p) &= \frac{144}{(2\pi)^4}(u^2 + v^2) \Big((u + s)^2 + (v + t)^2 - \frac{2\pi}{b}(u + s)\Big) \\
&= \frac{9}{\pi^4} |\wp(q)|^2 \Big(|\wp(q) + \eta_1|^2 - \frac{2\pi}{b} {\rm Re}(\wp(q) + \eta_1)\Big).
\end{split}
\end{equation}

By (\ref{ci-nL2}), we compute easily that $c_1 = c_2 = \wp'(q)^{-2}$, $c_0 = -2 \wp(q) \wp'(q)^{-2}$, and $s = c_1 + c_2 = 2 \wp'(q)^{-2}$. Hence by Theorem \ref{D(p)-formula}
\begin{equation*}
\begin{split}
D(p) &= 4b e^c |\wp'(q)|^{-4} \Big(|-\wp(q) - \eta_1|^2 + \frac{2\pi}{b}{\rm Re}(-\wp(q) - \eta_1)\Big) \\
&= c_p^{-1} \det D^2 G_2(p),
\end{split}
\end{equation*}
where
\begin{equation} \label{cp2}
c_p = \frac{9 e^{-c}}{4b \pi^4} |\wp(q)|^2 |\wp'(q)|^4 \ge 0.
\end{equation}
Since $\wp'(q) \ne 0$, $c_p > 0$ unless $\wp(q) = \pm \sqrt{g_2/12}= 0$. This is the case precisely when $\tau$ is equivalent to $e^{\pi i/3}$. We leave the simple consistency check of (\ref{cp1}) and (\ref{cp2}) with the general formula (\ref{cp-expression}) to the readers.
\end{example}

%\begin{remark}
%In the bubbling analysis of a class of semi-linear equations including (\ref{mfe}), the requirement that (i) $D(p) \ne 0$ and (ii) $\det D^2 G_n(p)\neq 0$ plays a fundamental role. See Theorem C for example.
%
%Theorem \ref{DD-conj} implies that (ii) $\Rightarrow$ (i) except for a finite set of tori. For $n = 2$, the only exceptional torus being the one with $\tau = e^{\pi i/3}$. For the case $\tau = e^{\pi i/3}$, $q \in \{q_+, q_-\}$, we have $\wp(q) = 0$ and $\det D^2 G_2(q, -q) = 0$ by (\ref{det2-2}). But
%$$
%D(q, -q)  = 4be^c|\wp'(q)|^{-4} \eta_1\Big(\eta_1 - \frac{2\pi}{b}\Big) < 0
%$$
%(since $\eta_1 > 0$ and $\eta_1 - \tfrac{2\pi}{b} < 0$ at $\tau = e^{\pi i/3}$, c.f.~\cite{LW}). In this case $q_+ = q_-$ and $Y_n$ is singular. We do not know whether the uniqueness statement mentioned in Theorem C holds or not.
%\end{remark}

\end{document}